\newcommand{\DR}{{}^{\ast}\mkern-1.3mu {\mathbf{ R}}\mkern-1.3mu^{\ast}}
\newcommand{\dR}{{}^{\ast}{\mathbf{ R}}^{\ast}}
\newcommand{\cc}{{\mathbf {c}} }
\newcommand{\Scal}{{\mathbf {Scal}}}
\newcommand{\R}{{\rm R}}
\newcommand{\g}{{\mathbf g}}
\newcommand{\Ric}{{\mathbf {Ric}}}
\DeclareMathAlphabet{\mathpzc}{OT1}{pzc}{m}{it}
\def\R{\mathbf R}
\newtheorem{theorem}{Theorem}[section]
\newtheorem{corollary}[theorem]{Corollary}
\newtheorem{lemma}[theorem]{Lemma}
\newtheorem{proposition}[theorem]{Proposition}
\newtheorem{thm}{Theorem}[section]
\newtheorem{definition}[thm]{Definition}
\newtheorem{example}{Example}[section]
\newtheorem{remark}[thm]{Remark}
\begin{document}
\author{Mohammed Larbi Labbi} 
\address{Department of Mathematics\\
 College of Science\\
  University of Bahrain\\
  32038, Bahrain.}
\email{mlabbi@uob.edu.bh}
\renewcommand{\subjclassname}{
  \textup{2020} Mathematics Subject Classification}
\subjclass[2020]{Primary 53C21, 53B20; Secondary 53C80, 83C05.}

\title{Generalized Double Duals of the Riemann Tensor in Geometry and Gravity}  \maketitle

\begin{abstract}
The Riemann curvature tensor fully encodes local geometry, but its Ricci contraction retains only limited information: only the Ricci tensor and the scalar curvature survive, while the Weyl curvature vanishes identically. We show that contracting instead the double dual of the Riemann tensor unlocks the full curvature structure, producing a canonical hierarchy of symmetric, divergence--free $(p,p)$ double forms. These tensors satisfy the first Bianchi identity and obey a hereditary contraction relation interpolating between the double dual tensor and the Einstein tensor.

We prove that, in a generic geometric setting, each tensor in this hierarchy is the unique divergence--free $(p,p)$ double form depending linearly on the Riemann curvature tensor, thereby providing canonical higher--rank parents of the Einstein tensor.

Their sectional curvatures coincide with the $p$--curvatures; notably, the $2$--curvature determines the full Riemann curvature tensor and forces the $\hat A$--genus of a compact spin manifold to vanish when nonnegative, a property not shared by Ricci or scalar curvature. Finally, we extend the construction to Gauss--Kronecker curvature tensors and Lovelock theory, showing in particular that the second Lovelock tensor $T_4$ admits a genuine four--index parent tensor.
\end{abstract}

\keywords{ Double forms; double dual curvature tensor; divergence-free curvature tensors; uniqueness theorems; vanishing theorem; $p$-curvature; Lovelock tensors; four-index gravity.}
\tableofcontents

\section{Introduction and statement of the main results}
The purpose of this section is to state the main results of the paper and to motivate the study of higher--rank curvature tensors arising from the Riemann curvature tensor, as well as to explain the role of the double dual construction in organizing curvature information. We begin by recalling the limitations of Ricci contraction as a mechanism for extracting geometric invariants from the Riemann tensor. We then outline the canonical hierarchy of divergence--free curvature tensors introduced in this paper and indicate their geometric, topological, and gravitational significance.

\subsection{Double dual of the Riemann curvature tensor and parents of the Einstein tensor}

The Riemann curvature tensor $\R$ is a $(2,2)$ double form that already encodes
the full local geometry of a Riemannian manifold. The difficulty addressed in
this paper is therefore not a lack of information in $\R$, but rather the
degeneracy of the Ricci contraction as a mechanism for extracting geometric
invariants. Indeed, $\R$ admits only two nontrivial Ricci contractions: the
Ricci tensor $\Ric = \cc\R$ and the scalar curvature $\Scal = \cc^{2}\R$. In
particular, the Weyl tensor, which represents the trace-free part of $\R$,
vanishes identically under all Ricci contractions. As a consequence, any
curvature theory formulated solely in terms of Ricci contractions is blind to
a substantial portion of the curvature information already present in $\R$.

This limitation is not intrinsic to the Riemann tensor itself, but to the Ricci
contraction. It therefore motivates the search for canonical curvature tensors
obtained from $\R$ by operations other than contraction, while preserving the
fundamental algebraic and differential properties required in geometry and
physics. A natural construction in this direction is provided by the double
dual operation 
\[
\dR(\,\cdot\,,\,\cdot\,) := \R(\ast\,\cdot\,,\ast\,\cdot\,),
\]
where the Hodge star acts independently on each argument of the double form
$\R$. Unlike Ricci contraction, this operation does not reduce curvature order
but reorganizes and expands the information contained in $\R$ into a higher-rank
$(n-2,n-2)$ double form. In dimension four, the double dual preserves degree and
plays a fundamental role in geometry and general relativity; in higher
dimensions, it provides a canonical mechanism for accessing curvature
information that is invisible to Ricci contractions alone.

In this paper, we study the successive Ricci contractions of the double dual
tensor $\dR$. For each integer $p$ with $0 \le p \le n-2$, we denote by
\[
\DR_p := \frac{1}{(n-p-2)!}\,\cc^{\,n-p-2}(\dR)
\]
the corresponding symmetric $(p,p)$ double form.
For $p=0$, $\DR_0$ coincides with one half of the scalar curvature.
For $p=1$, $\DR_1$ coincides with the Einstein tensor
$\tfrac12 \Scal \g-\Ric$.
For $p=2$, $\DR_2$ is a symmetric $(2,2)$ double form enjoying the same
algebraic symmetries as the Riemann tensor, including the first Bianchi
identity, and it is free of divergence, making it a natural candidate for
four-index formulations of gravity (see section \ref{motiv}). Moreover, $\DR_2$ has the same Weyl part
as $\R$ (see Corollary \ref{same-weyl}) and determines $\R$ in all dimensions, (see Proposition \ref{dR-to-R}).

In general, all tensors $\DR_p$ are symmetric, satisfy the first Bianchi
identity and are free of divergence, see Corollary \ref{flat}. They satisfy the hereditary contraction
relation
\[
\cc(\DR_p) = (n-p-1)\,\DR_{p-1},
\]
so that the tensors $\DR_p$, for $p \ge 2$, form a hierarchy of
\emph{parent tensors} of the Einstein tensor:

\[\displaystyle{
\dR=\dR_{n-2}\xrightarrow{\cc} \dR_{n-3} \xrightarrow{\cc} ...\xrightarrow{\cc}\dR_2 \xrightarrow{\cc }\dR_{1}={\mathbf{Einstein\,\, tensor}}.}
\]

A central result of this paper (Theorems  \ref{uniqueness-p1} and \ref{uniqueness-thm} )
shows that the tensors $\DR_p$, for $1 \le p \le n-2$, are generically the
\emph{unique} divergence-free $(p,p)$ double forms that depend linearly on the
Riemann curvature tensor.

\subsection{Sectional curvature of the generalized double duals}

We also study the sectional curvatures associated with the tensors $\DR_p$.
These are precisely one half of the $p$-curvatures $s_p$, previously introduced
and studied by the author, see for instance \cite{Labbi-these, Labbi-Toulouse, Labbi-AGAG, Labbi-Botvinnik}. We prove in
Theorem~\ref{fbp} that positivity of $s_p$ satisfies the fiber bundle
property.

Among these invariants, the $2$-curvature $s_2$ plays a particularly important
role. We show that $s_2$ determines the tensor $\DR_2$, and consequently the
full Riemann curvature tensor $\R$
(Proposition~\ref{dR-to-R}). We prove in
Theorem \ref{Ahat} that if $(M^{4k},\g)$ is a compact spin manifold with
nonnegative $2$-curvature, then its $\hat A$-genus vanishes. A property  that does not hold under $\Scal\geq 0$ or even under $\Ric\geq 0$, see example \ref{example1}. 

\subsection{Applications to modified field equations of general relativity}

A four-index formulation of the gravitational field equations in arbitrary dimension was recently proposed by Moulin \cite{Moulin1,Moulin2}, in which the geometric side of the equations is expressed directly in terms of the double dual curvature tensor $\dR_2$. In this approach, contraction of the four-index equations recovers the classical Einstein equation, while retaining additional curvature information prior to contraction.

Such formulations provide a natural setting in which higher rank divergence free curvature tensors arise, and they motivate the study of canonical four index objects satisfying the same structural properties as the Einstein tensor. The conceptual background and geometric motivation for these four index formulations are discussed in detail in Section~\ref{motiv}.

\subsection{Parents of Lovelock tensors and the case of $T_4$}

In Section~\ref{lovelock}, for each integer $q$ such that $2 \le 2q \le n$, we
apply the same analysis to the Gauss--Kronecker curvature tensors $\R^q$, which
are $(2q,2q)$ double forms. We consider their double dual tensors defined by
\[
{}^{\ast}\!\left(\R^q\right)^{\ast}(\,\cdot\,,\,\cdot\,)
:= \R^q(\ast\,\cdot\,,\ast\,\cdot\,),
\]
which are $(n-2q,n-2q)$ double forms. For simplicity, we denote these tensors by
$\ast(\R^q)$ or simply $\ast\,\R^q$ in what follows.

By performing successive Ricci contractions, we define a family of curvature
tensors, called the $(p,q)$-curvature tensors, for $0 \le p \le n-2q$, by
\[
\R^{(p,q)}
:=
\frac{1}{(n-2q-p)!}\,\cc^{\,n-2q-p}\!\left(\ast\,\R^q\right).
\]
For $p=0$, the tensors $\R^{(0,q)}$ are scalar functions and coincide, up to a
constant factor, with the Gauss--Bonnet curvatures of $(M,\g)$. In particular,
when the dimension $n$ is even, $\R^{(0,n/2)}$ is the Gauss--Bonnet integrand.
For $q=1$, we recover the generalized double duals of the Riemann tensor,
namely $\R^{(p,1)} = \dR_p$. For $p=1$, the tensor $\R^{(1,q)}$ coincides with
the Lovelock tensor $T_{2q}$ of order $2q$.

The curvature tensors $\R^{(p,q)}$ are symmetric $(p,p)$ double forms that
satisfy the first Bianchi identity. Although they do not satisfy the second
Bianchi identity in general, they are always free of divergence. Moreover, they
satisfy the following hereditary property with respect to Ricci contraction,
for $p \ge 1$:
\[
\R^{(p,q)}
\xrightarrow{\ \cc\ }
\R^{(p-1,q)}
\xrightarrow{\ \cc\ }
\cdots
\xrightarrow{\ \cc\ }
\R^{(1,q)} = \mathbf{Lovelock\, tensor}.
\]
In particular, $(p-1)$ contractions of $\R^{(p,q)}$ yield the Lovelock tensor
$T_{2q}$.
For this reason, we shall call the tensors $\R^{(p,q)}$ with $p \ge 2$
\emph{$p$-parents of the Lovelock tensor}.

The first Lovelock tensor $T_2$, corresponding to $q=1$, is the usual Einstein
tensor. The second Lovelock tensor $T_4$, obtained for $q=2$, can classically be
recovered from the $(4,4)$ double form $\R^2$ via the formula (see
Equation~\eqref{dRcR} below)
\[
T_4
=
\frac{\cc^{4}\R^2}{4!}\,\g
-
\frac{\cc^{3}\R^2}{3!}.
\]
This requires contracting either the $8$-index tensor $\R^2$ or its double dual
$\ast\,\R^2$, which is a $(2n-8)$-index tensor. We prove in this paper that the
second Lovelock tensor $T_4$ can alternatively be obtained by contracting a
\emph{four-index} curvature tensor, in complete analogy with the way the
Einstein tensor is obtained from the Riemann tensor. More precisely, we show in
Proposition~\ref{lov-index4} that $T_4$ can be obtained by contracting the
$(2,2)$ double form $\R \circ \dR_2$, where $\dR_2$ denotes the generalized
double dual of the Riemann tensor and $\circ$ is the composition product of
double forms:
\[
T_4
=
\frac{1}{2}\,\cc^2(\R \circ \dR_2)\,\g
-
2\,\cc(\R \circ \dR_2).
\]

In the final section, we speculate on possible modifications of Lovelock field
equations for gravity. We first observe that significant components of the
Gauss--Kronecker curvature tensors $\R^q$, including their trace-free parts, do
not contribute to the classical Lovelock field equations. This loss of
information stems from the fact that one must perform $(2q-1)$ contractions of
$\R^q$ in order to recover the Lovelock tensor $T_{2q}$.

We show in Section~\ref{subs} that, in order for all components of $\R^q$ to
contribute effectively to the field equations, one may replace the Lovelock
tensor $T_{2q}$ by its parent tensor $\R^{(2q,q)}$ when $2q \le n/2$, and by
$\R^{(n-2q,q)}$ when $2q > n/2$.

\subsection{Motivation: four-index gravity and the double dual curvature tensor}\label{motiv}

Einstein’s field equations are written in terms of the Einstein tensor
$\frac{1}{2} \Scal\,  \g-\Ric$, which is obtained from the Riemann curvature tensor by
Ricci contraction. As a result, the traceless part of the Riemann tensor, namely
the Weyl tensor, disappears completely in Einstein’s equation, since it
vanishes under all Ricci contractions. This is despite the fact that the Weyl
tensor encodes essential geometric and physical information, including tidal
forces and the free gravitational field.

This observation has motivated the development of gravitational field
equations written directly at the level of four-index curvature tensors. A
natural candidate for such formulations is the double dual $\dR$ of the Riemann
tensor, which has the same algebraic symmetries as $\R$ and is free of
divergence.

In dimension $n=4$, the double dual of the Riemann tensor is classically given
by (see for instance \cite{MTW})
\[
\dR_{ijkl}
=
\frac{1}{4}\,\varepsilon_{ij}{}^{pq}\,\R_{pqrs}\,\varepsilon^{rs}{}_{kl},
\]
and it satisfies the algebraic identity
\[
\dR_{ijkl}
=
\R_{ijkl}
-
(\g_{ik}\Ric_{jl}-\g_{il}\Ric_{jk}+\g_{jl}\Ric_{ik}-\g_{jk}\Ric_{il})
+
\frac12(\g_{ik}\g_{jl}-\g_{il}\g_{jk})\Scal.
\]
The tensor $\dR_{ijkl}$ has the same algebraic symmetries as the Riemann tensor,
satisfies the first Bianchi identity, and is free of divergence. Its contraction
reproduces the Einstein tensor, making it a natural four-index analogue of the
Einstein tensor.

Recently, Moulin  \cite{Moulin1, Moulin2}, proposed a four-index reformulation of Einstein’s field
equations in arbitrary dimension, in which the geometric side of the field
equation is given by the double dual tensor $\dR_2$. The contracted equation is
equivalent to the classical Einstein equation, while the traceless part yields
an additional constraint involving the Weyl tensor.

Earlier, Zakir \cite{Zakir} emphasized that an essential part of the gravitational field is
lost when passing to two-index field equations and advocated writing
gravitational field equations directly at the four-index level. The results of
the present paper provide a natural geometric framework for these approaches by
embedding the double dual tensor into a canonical hierarchy of divergence-free
curvature tensors and extending the construction to Lovelock theory.
\bigskip

Throughout the paper, all manifolds, metrics, and curvature tensors are assumed to be smooth, the Riemannian metric is taken to be positive definite, and $n$ denotes the dimension of the underlying manifold. All statements are therefore formulated in the Riemannian setting; nevertheless, the results of Sections~2,~3, and~5 are largely algebraic and could be extended to the pseudo-Riemannian case with minor and standard modifications. By contrast, the notion of sectional curvature and the associated positivity results developed in Section~4 are inherently Riemannian.

\section{The double dual curvature tensor and its contractions}

This section develops the algebraic and differential properties of the double dual of the Riemann curvature tensor viewed as a double form. Our aim is to construct a canonical hierarchy of symmetric $(p,p)$ double forms obtained by successive contractions, and to characterize these tensors intrinsically by their divergence--free property. The main result of the section shows that, under mild and generic geometric assumptions, this hierarchy exhausts all divergence--free $(p,p)$ double forms depending linearly on the Riemann curvature tensor.\\

Let $(M^n,\g)$ be a Riemannian manifold with $n \ge 4$. At each point $p \in M$,
the $(0,4)$ Riemann curvature tensor $R_{ijkl}$ may be viewed as an element of
$\Lambda^2 T^*_p M \otimes \Lambda^2 T^*_p M$, or equivalently as a symmetric
$(2,2)$ double form on $T_p M$. The \emph{double dual of the Riemann tensor} is
well known in the physics literature in dimension four. It is obtained by
applying the Hodge star operator to each antisymmetric index pair of the Riemann
curvature tensor and often denoted by left and right stars especially in physics literature:
\[
\dR(\,\cdot\,,\,\cdot\,) := \R(\ast\,\cdot\,,\ast\,\cdot\,),
\]
where the Hodge star acts independently on each argument of the double form
$\R$. In dimension four, its components are given by (see
\cite[page 325]{MTW}) 
\begin{equation}
(\DR)_{ijkl}
=
\frac{1}{4}\,\sum_{p,q,r,s}\varepsilon_{ijpq}\,\varepsilon_{klrs}\,R_{pqrs},
\end{equation}
where $\varepsilon_{ijpq}$ denotes the Levi--Civita symbol. The tensor $\DR$
shares the same algebraic symmetries as the Riemann tensor $\R$ and satisfies
the first Bianchi identity. Although it does not satisfy the second Bianchi
identity in general, it is always free of divergence
\cite[page 325]{MTW}. Moreover, its first contraction yields the Einstein
tensor, while its second contraction yields the scalar curvature.

The above expression for the components of $\DR$ does not generalize directly
to higher dimensions, since the symbols $\varepsilon_{ijpq}$ are only defined
when $n=4$. However, one may rewrite the formula using the generalized
Kronecker delta as
\begin{equation}
(\DR)_{ijkl}
=
\frac{1}{4}\,\sum_{p,q,r,s}\delta^{ijpq}_{klrs}\, R_{pqrs}.
\end{equation}
This expression is valid in all dimensions $n \ge 4$ and therefore provides a
natural definition of the double dual of the Riemann tensor in arbitrary
dimension and shall henceforth be denoted  as $\dR_2$. The resulting tensor enjoys the same algebraic and differential
properties as in dimension four. These properties motivate the introduction of
higher-index analogues whose successive contractions reproduce classical
curvature tensors while preserving divergence-free structure.

\[
\begin{array}{ccc}
\textbf{Riemann tensor } \R & & \textbf{Double dual Riemann tensor } \DR_2 \\[0.2cm]
\text{Symmetric $(2,2)$ double form,} & & \text{Symmetric $(2,2)$ double form,} \\
\text{satisfies Bianchi I and II identities} & &
\text{satisfies Bianchi I identity,} \\
& & \text{free of divergence} \\[0.2cm]
\Big\downarrow{\scriptstyle\cc} & & \Big\downarrow{\scriptstyle\cc} \\
\Ric & & \mathrm{Einstein} = \tfrac12\Scal\,\g - \Ric \\[0.2cm]
\Big\downarrow{\scriptstyle\cc} & & \Big\downarrow{\scriptstyle\cc} \\
\Scal & & \tfrac{n-2}{2}\Scal
\end{array}
\]
Here $\cc$ denotes the contraction map. Beyond their intrinsic geometric
interest, the double dual tensor $\dR_2$ also plays a role in gravitational
theory. A modern perspective was recently provided by Moulin
\cite{Moulin1,Moulin2}, who developed a four-index formulation of the
gravitational field equations in all dimensions $n \ge 4$ whose contraction
recovers the classical Einstein equation. In this formulation, and in contrast
to the classical Einstein equation, the full curvature tensor plays a central
role through its double dual $\dR_2$.

\subsection{Higher index double dual tensors of Riemann}

\begin{definition}
For each integer $p$ with $0 \le p \le n-2$, the \emph{generalized double dual
of the Riemann tensor} of order $p$, denoted by
$\dR_p = (\dR_{i_1\ldots i_p,j_1\ldots j_p})$, is the covariant tensor of order
$2p$ whose components are given by
\begin{equation}
\dR_{i_1\ldots i_p,j_1\ldots j_p}
=
\frac{1}{4}\,\sum_{p,q,r,s}
\delta^{pqi_1\ldots i_p}_{rsj_1\ldots j_p}\, R_{pqrs}.
\end{equation}
For $p=0$, we set
\[
\dR_0
=
\frac{1}{4}\,\sum_{p,q,r,s}\delta^{pq}_{rs}\, R_{pqrs}
=
\frac{\Scal}{2}.
\]
\end{definition}

These tensors are obtained by reorganizing and expanding the curvature information of the
Riemann tensor through double duality and contraction. For $p=1$, the tensor
$\dR_1$ coincides with the classical Einstein tensor. For $p=2$,  we recover the double dual tensor $\dR_2$ of the previous section.

\subsection{Index-free formulation of the higher double duals of Riemann}

The following proposition provides an index-free formula for the higher double
duals of the Riemann tensor using the exterior product of double forms.\\
We emphasize that its proof relies on a systematic passage from index-based expressions to index-free identities using the formalism of double forms. This approach allows one to reinterpret classical curvature contractions in a conceptual and coordinate-free manner.

\begin{proposition}\label{index-free}
For each $p$ with $0 \le p \le n-2$, one has
\[
\dR_p
=
\ast\,\frac{\g^{\,n-p-2}\,\R}{(n-p-2)!}.
\]
\end{proposition}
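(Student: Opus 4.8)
The plan is to prove the identity by downward induction on $p$, starting at the top case $p=n-2$ and descending by contraction. The engine of the argument is the single intertwining relation
\[
\cc(\ast\,\omega)=\ast(\g\,\omega),
\]
valid for every double form $\omega$, which expresses the Hodge-theoretic duality between multiplication by the metric $\g$ and the contraction $\cc$ in the double-form calculus. I would cite (or reprove) this relation as a standard structural fact; the metric powers, where $\ast(\g^{k}/k!)=\g^{\,n-k}/(n-k)!$ and $\cc(\g^{m})=m(n-m+1)\,\g^{m-1}$ make both sides equal to $(k+1)\,\g^{\,n-k-1}/(n-k-1)!$, provide a clean check of the constants.

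First I would settle the base case $\dR_{n-2}=\ast\,\R$ directly from the component definition. When $p=n-2$ the generalized Kronecker delta $\delta^{ab\,i_1\ldots i_{n-2}}_{cd\,j_1\ldots j_{n-2}}$ carries $n$ upper and $n$ lower indices, so in the positive-definite setting it splits as a product of two Levi--Civita symbols, $\delta^{ab\,i_1\ldots i_{n-2}}_{cd\,j_1\ldots j_{n-2}}=\varepsilon^{ab\,i_1\ldots i_{n-2}}\,\varepsilon_{cd\,j_1\ldots j_{n-2}}$. Moving the pair $a,b$ to the rear of each symbol is an even permutation, and after this reordering the defining sum is exactly the double Hodge star applied to the two antisymmetric index pairs of $\R$; hence $\dR_{n-2}=\ast\,\R$, which is the right-hand side at $p=n-2$ since $\g^{0}\R/0!=\R$.

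Next I would record that both sides obey the same first-order recursion. On the left, I would contract the component definition and apply the standard trace rule for the generalized Kronecker delta, $\sum_{m}\delta^{\cdots\,m}_{\cdots\,m}=(n-k)\,\delta^{\cdots}_{\cdots}$ with $k=p+1$ surviving index pairs, which yields $\cc(\dR_p)=(n-p-1)\,\dR_{p-1}$. On the right, writing $S_p:=\ast\!\left(\g^{\,n-p-2}\R/(n-p-2)!\right)$ and using the intertwining relation together with $\g^{\,n-p-1}/(n-p-2)!=(n-p-1)\,\g^{\,n-p-1}/(n-p-1)!$, I would obtain $\cc(S_p)=\ast\!\left(\g^{\,n-p-1}\R/(n-p-2)!\right)=(n-p-1)\,S_{p-1}$. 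Since each recursion rewrites the index-$(p-1)$ object as $\tfrac{1}{n-p-1}\cc$ of the index-$p$ object, and the constants $n-p-1$ are nonzero for $1\le p\le n-2$, the agreement $\dR_{n-2}=S_{n-2}$ propagates downward to every $0\le p\le n-2$, which is the claim.

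The main obstacle is purely combinatorial: keeping the factorial normalizations and reordering signs under complete control, so that the $\tfrac14$ of the definition, the $1/(n-p-2)!$ of the statement, and the normalization of the Hodge star on each factor all match. For this reason I would isolate the intertwining relation $\cc\,\ast=\ast\,\g$ as a separate lemma. A more computational alternative bypasses the induction altogether: expand $\ast\!\left(\g^{\,n-p-2}\R\right)$ in components, observing that contracting the Kronecker-delta blocks produced by $\g^{\,n-p-2}$ against the two Levi--Civita symbols of the stars collapses precisely to the generalized Kronecker delta of the definition. This is the same identity repackaged, and there too the only delicacy lies in tracking the factorial constants.
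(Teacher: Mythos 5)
Your proof is correct, but it takes a genuinely different route from the paper's. The paper proves the identity in one stroke, uniformly in $p$: it identifies the generalized Kronecker delta $\delta^{pq\,i_1\ldots i_p}_{rs\,j_1\ldots j_p}$ with the evaluation of $\g^{p+2}/(p+2)!$ on the orthonormal frame, recognizes the defining sum as the interior product $i_{\R}\bigl(\g^{p+2}/(p+2)!\bigr)$, and concludes via the identity $i_{\R}=\ast\,\R\,\ast$ together with $\ast\bigl(\g^{p+2}/(p+2)!\bigr)=\g^{n-p-2}/(n-p-2)!$. You instead anchor the statement at the top degree $p=n-2$, where the full $n$-index delta factors into two Levi--Civita symbols (valid in the orthonormal, positive-definite setting, with the even reordering correctly accounted for), and descend by the recursion $\cc(\dR_p)=(n-p-1)\,\dR_{p-1}$, matched on the right-hand side through the intertwining $\cc\,\ast=\ast\,\g$ --- which is precisely the identity $\cc^{\,r}=\ast\,\g^{\,r}\,\ast$ that the paper invokes only \emph{after} the proposition, in the corollary deriving equation \eqref{trace}. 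Your argument thus uses the contraction relation \eqref{trace} as an input rather than obtaining it as a consequence; this is logically sound because you derive it independently of the proposition, on the left from the delta trace rule applied to the component definition, and on the right from the intertwining lemma (your consistency check on metric powers, using $\ast(\g^{k}/k!)=\g^{\,n-k}/(n-k)!$ and $\cc(\g^{m})=m(n-m+1)\,\g^{m-1}$, verifies the constants correctly). What each approach buys: the paper's is shorter and induction-free, at the cost of importing the interior-product machinery $i_{\R}=\ast\,\R\,\ast$ from the cited references; yours needs only the top-degree duality plus one structural lemma, keeps every factorial normalization checkable at two isolated points (base case and recursion constant), and delivers \eqref{trace} as a byproduct. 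Your closing ``computational alternative'' --- expanding $\ast\bigl(\g^{\,n-p-2}\R\bigr)$ in components and collapsing the delta blocks --- is essentially the paper's proof read in reverse.
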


\begin{proof}
Let $\{e_1,\ldots,e_n\}$ be a local orthonormal frame. Using the standard
identification between the generalized Kronecker delta and exterior powers of
the metric tensor, we compute
\begin{equation}
\begin{split}
\dR_{i_1\ldots i_p,j_1\ldots j_p}
=&
\frac{1}{4}\,\sum_{p,q,r,s}
\delta^{pqi_1\ldots i_p}_{rsj_1\ldots j_p}\, \R_{pqrs} \\
=&
\frac{1}{4}\,\sum_{p,q,r,s}
\frac{\g^{p+2}}{(p+2)!}
(e_p,e_q,e_{i_1},\ldots,e_{i_p};e_r,e_s,e_{j_1},\ldots,e_{j_p})\, \R_{pqrs} \\
=&
\frac{1}{4}\,
\left(
i_{\R}\frac{\g^{p+2}}{(p+2)!}
\right)
(e_{i_1},\ldots,e_{i_p};e_{j_1},\ldots,e_{j_p}).
\end{split}
\end{equation}
Consequently,
\[
\dR_p
=
\frac{1}{4}\, i_{\R}\frac{\g^{p+2}}{(p+2)!}
=
\frac{1}{4}\,\ast\,\R\ast\!\left(\frac{\g^{p+2}}{(p+2)!}\right)
=
\ast\,\frac{\g^{n-p-2}\,\R}{(n-p-2)!}.
\]
Here $i_\omega$ denotes the interior product of double forms, which satisfies
$i_{\R} = \ast\,\R\ast$; see
\cite{Labbi-Bel,Labbi-Lapl-Lich}. This completes the proof.
\end{proof}
\begin{corollary}\label{flat}
\begin{enumerate}
\item
For each $p$ with $1 \le p \le n-2$, the tensors $\dR_p$ are free of divergence.
\item
For each $p$ with $2 \le p \le n-2$, one has
\[
\dR_p = 0 \iff \R = 0.
\]
\end{enumerate}
\end{corollary}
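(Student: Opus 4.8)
Both statements will be read off from the index-free formula $\dR_p = \ast\,\g^{\,n-p-2}\R/(n-p-2)!$ of Proposition~\ref{index-free}, but through two genuinely different mechanisms: the differential Bianchi identity for the divergence, and injectivity of exterior multiplication by $\g$ for the vanishing. For part (1), I would use the two structural properties of the exterior derivative $D$ of double forms: it is a derivation for the exterior product, and it is Hodge-dual to the divergence, $\delta = \pm\,\ast D\,\ast$ (see \cite{Labbi-Bel,Labbi-Lapl-Lich}). Setting $\omega = \g^{\,n-p-2}\R/(n-p-2)!$, so that $\dR_p = \ast\,\omega$, I compute $\delta\,\dR_p = \pm\,\ast D\,\ast\ast\,\omega = \pm\,\ast D\,\omega$ after using $\ast\ast=\pm\,\mathrm{Id}$. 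Because $\g$ is parallel, $D\g = 0$, and since $D$ is a derivation, $D\omega = \g^{\,n-p-2}(D\R)/(n-p-2)!$. The second Bianchi identity is precisely $D\R = 0$, whence $D\omega = 0$ and $\delta\,\dR_p = 0$. The only delicate point is the bookkeeping of the signs produced by $\ast\ast$ and by $\delta=\pm\,\ast D\,\ast$ on the relevant bidegrees, none of which affects the conclusion.

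For part (2), the implication $\R=0\Rightarrow\dR_p=0$ is immediate. Conversely, since the Hodge star is a linear isomorphism on each factor, $\dR_p=0$ is equivalent to $\g^{\,n-p-2}\R=0$, and the whole statement reduces to the injectivity of the multiplication operator $L_\g^{\,k}=\g^{\,k}\!\cdot$ on $(2,2)$ double forms, with $k=n-p-2$. I would exploit the $\mathfrak{sl}_2$-structure carried by the pair $L=\g\cdot$ and $\Lambda=\cc$, whose commutator acts on $(p,q)$ double forms as $[\Lambda,L]=(n-p-q)\,\mathrm{Id}$; on curvature-type $(2,2)$ forms this gives weight $4-n=-(n-4)$. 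The hard-Lefschetz principle for this action then makes $L_\g^{\,k}$ injective exactly in the range $k\le n-4$. The hypothesis $2\le p\le n-2$ is equivalent to $0\le k=n-p-2\le n-4$, which sits precisely inside this range, so $\g^{\,n-p-2}\R=0$ forces $\R=0$.

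The main obstacle is thus the injectivity of $\g$-multiplication, and the threshold $p\ge 2$ is exactly where it becomes decisive: for $p=1$ one has $k=n-3>n-4$, the operator $\g^{\,n-3}$ is no longer injective on $(2,2)$ forms, and correspondingly $\dR_1$ is the Einstein tensor, which vanishes on any Ricci-flat but non-flat metric. If one prefers to avoid the full representation-theoretic statement, the injectivity can be repackaged into a single endpoint: multiplying $\g^{\,n-p-2}\R=0$ by $\g^{\,p-2}$ (allowed since $p\ge 2$) yields $\g^{\,n-4}\R=0$, so it suffices to know that $\g^{\,n-4}\colon\mathcal{D}^{2,2}\to\mathcal{D}^{n-2,n-2}$ is injective, equivalently that $\dR_2$ determines $\R$ (Proposition~\ref{dR-to-R}). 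Since here source and target have equal dimension $\binom{n}{2}^2$, this last point may alternatively be verified by checking surjectivity.
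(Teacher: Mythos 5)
Your proposal is correct and follows essentially the same route as the paper: for part (1) the paper applies the identity $\ast\,\delta\,\ast(\omega)=(-1)^{r}D\omega$ to $\omega=\g^{\,n-p-2}\R$, which is Bianchi-closed ($D\g=0$, $D\R=0$), exactly as you do, and for part (2) it invokes the cancellation (injectivity) property of multiplication by $\g^{\,k}$ for $k=n-p-2\le n-4$ (\cite[Proposition~2.3]{Labbi-double-forms}), which is precisely the $\mathfrak{sl}_2$/Lefschetz fact you re-derive. Your endpoint reduction via $\g^{\,p-2}$ and Proposition~\ref{dR-to-R}, and the observation that $p=1$ fails because the Einstein tensor vanishes on Ricci-flat non-flat metrics, are accurate additions but not a different method.
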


\begin{proof}
The first assertion follows from Proposition~\ref{index-free} and the identity
$\ast\,\delta\,\ast(\omega) = (-1)^r D\omega$ for an $(r,r)$ double form (see,
for instance, identity~(49) in \cite{Labbi-Lapl-Lich}), where $D$ denotes the
second Bianchi sum, $\delta$ the divergence operator, and
$\omega = \g^{n-p-2}\R$, which satisfies $D\omega = 0$.

The second assertion follows directly from Proposition~\ref{index-free} and the
cancellation property of exterior multiplication by powers of the metric
tensor $\g$; see \cite[Proposition~2.3]{Labbi-double-forms}.
\end{proof}
\begin{corollary}
For each $p$ with $1 \le p \le n-2$, one has
\begin{equation}\label{contract}
\dR_p
=
\frac{1}{(n-p-2)!}\,\cc^{\,n-p-2}(\ast\,\R).
\end{equation}
In particular,
\begin{equation}\label{trace}
\cc(\dR_p) = (n-p-1)\dR_{p-1}.
\end{equation}
\end{corollary}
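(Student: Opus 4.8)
The plan is to read both identities directly off the index--free formula of Proposition~\ref{index-free}, the single genuine ingredient being the way the Hodge star $\ast$ converts exterior multiplication by the metric $\g$ into the contraction $\cc$. Observe first that $\ast\,\R = \dR_{n-2}$ is the full double dual (the case $p=n-2$ of Proposition~\ref{index-free}), so \eqref{contract} amounts precisely to the operator identity $\cc^{\,n-p-2}(\ast\,\R) = \ast\,(\g^{\,n-p-2}\R)$, after which \eqref{trace} drops out by applying one further contraction.

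First I would establish the commutation rule $\cc\,\ast = \ast\,\g$ on double forms of square bidegree $(a,a)$. Since the contraction is interior multiplication by the metric double form, the relation $i_\omega = \ast\,\omega\,\ast$ already used in Corollary~\ref{flat} specializes at $\omega = \g$ to $\cc = \ast\,\g\,\ast$. Next, $\ast\ast = \mathrm{id}$ on any square double form: the star acts in each of the two arguments as the ordinary Hodge star on $a$--forms in dimension $n$, contributing the same factor $(-1)^{a(n-a)}$ in each slot, and this common sign squares to $+1$. Hence $\cc\,\ast = \ast\,\g\,(\ast\ast) = \ast\,\g$, and because $\ast$, $\g\,\cdot$ and $\cc$ all preserve square bidegree, an immediate induction yields $\cc^{\,k}\,\ast = \ast\,\g^{\,k}$ for every admissible $k$.

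Applying this last identity with $k = n-m-2$ to $\R$ and invoking Proposition~\ref{index-free}, which is valid for all $0\le m\le n-2$, I would obtain
\[
\dR_m = \ast\,\frac{\g^{\,n-m-2}\R}{(n-m-2)!} = \frac{1}{(n-m-2)!}\,\cc^{\,n-m-2}(\ast\,\R).
\]
The case $m=p$ is precisely \eqref{contract}. For \eqref{trace} I would then apply one more contraction and compare with the same formula at $m=p-1$:
\[
\cc(\dR_p) = \frac{1}{(n-p-2)!}\,\cc^{\,n-p-1}(\ast\,\R) = \frac{(n-p-1)!}{(n-p-2)!}\,\dR_{p-1} = (n-p-1)\,\dR_{p-1}.
\]

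All the content sits in the commutation relation $\cc\,\ast = \ast\,\g$; once it is granted, both statements reduce to factorial bookkeeping, the only arithmetic point being $(n-p-1)!/(n-p-2)! = n-p-1$. The sole step deserving care is therefore the justification of $\cc = \ast\,\g\,\ast$ together with the sign in $\ast\ast$, which is harmless precisely because the two--slot sign squares to $+1$ on square double forms, so that no sign corrections survive. The specialization of $i_\omega = \ast\,\omega\,\ast$ is part of the double--form calculus of \cite{Labbi-Bel,Labbi-Lapl-Lich}, and if one prefers it can be verified on decomposable monomials using the classical identity $\ast(\alpha\wedge\eta) = i_{\alpha^{\sharp}}(\ast\,\eta)$ applied in each of the two exterior factors.
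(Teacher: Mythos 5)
Your proposal is correct and follows essentially the same route as the paper, which deduces both identities in one line from Proposition~\ref{index-free} together with the operator identity $\cc^{\,r} = \ast\,\g^{\,r}\ast$ (equivalently your $\cc^{\,k}\ast = \ast\,\g^{\,k}$, since $\ast\ast=\mathrm{id}$ on square double forms). The only difference is that you derive this identity from $i_{\g}=\ast\,\g\,\ast$ and the sign computation for $\ast\ast$, whereas the paper simply cites it; your factorial bookkeeping for \eqref{trace} matches the paper's intent exactly.
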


\begin{proof}
This follows directly from Proposition~\ref{index-free} and the identity
$\cc^{\,r} = \ast\,\g^{\,r}\ast$.
\end{proof}
We have therefore obtained a sequence of higher-rank tensors that all share the
algebraic symmetries of the Riemann tensor, satisfy the first Bianchi identity,
and are free of divergence. Moreover, successive contractions recover the
Einstein tensor and the scalar curvature:
\[
\dR
=
\dR_{n-2}
\xrightarrow{\cc}
\dR_{n-3}
\xrightarrow{\cc}
\cdots
\xrightarrow{\cc}
\dR_1
=
\text{Einstein tensor}
\xrightarrow{\cc}
\Scal.
\]

The following property shows that, at each point of the manifold, the angle
(with respect to the natural inner product on double forms) between the
Riemann tensor $\R$ and its double dual $\dR_2$ is determined by the
Gauss--Bonnet curvature
$h_4 = \|\R\|^2 - \|\Ric\|^2 + \tfrac14 \Scal^2$.

\begin{proposition}
One has
\[
\langle \R, \dR_2 \rangle = h_4.
\]
\end{proposition}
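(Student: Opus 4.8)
The plan is to reduce the inner product to a full contraction of the exterior square $\R^2$ of the Riemann tensor, which is precisely the second Gauss--Bonnet curvature. First I would invoke Proposition~\ref{index-free} with $p=2$, giving the index-free expression $\dR_2 = \ast\,\g^{n-4}\R/(n-4)!$. The purpose of this rewriting is that it exhibits $\dR_2$ as a single Hodge star of a multiple of $\g^{n-4}\R$, so that the two Hodge stars implicit in $\langle\R,\dR_2\rangle$ can be arranged to cancel.

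Next I would use the fundamental compatibility between the inner product of double forms, the exterior product, and the Hodge star, namely $\langle\omega_1,\omega_2\rangle = \ast(\omega_1\cdot\ast\omega_2)$ for double forms of equal bidegree (the double--form analogue of $\alpha\wedge\ast\beta=\langle\alpha,\beta\rangle\,\mathrm{vol}$; see \cite{Labbi-Bel,Labbi-Lapl-Lich}, where $\cdot$ denotes the exterior product). Applying it with $\omega_1=\R$ and $\omega_2=\dR_2$, and using that $\ast\ast$ acts as the identity on the symmetric double form $\g^{n-4}\R/(n-4)!$, I obtain $\ast\,\dR_2 = \g^{n-4}\R/(n-4)!$ and hence
\[
\langle\R,\dR_2\rangle = \ast\Bigl(\R\cdot\frac{\g^{n-4}\R}{(n-4)!}\Bigr) = \frac{1}{(n-4)!}\,\ast\bigl(\g^{n-4}\R^2\bigr),
\]
where $\R^2$ is the exterior (Gauss--Kronecker) square of $\R$. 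An equivalent route, bypassing the product formula, is to start from the contraction expression $\dR_2=\frac{1}{(n-4)!}\cc^{\,n-4}(\ast\R)$ of Equation~\eqref{contract} and move the contractions across the inner product via the adjunction $\langle\alpha,\cc\beta\rangle=\langle\g\alpha,\beta\rangle$, which leads to the same scalar after one application of $\ast\ast=\mathrm{id}$.

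Finally I would identify $\frac{1}{(n-4)!}\ast(\g^{n-4}\R^2)$ with the second Gauss--Bonnet curvature $h_4$ and expand it through the classical Gauss--Bonnet--Lanczos formula, which in the double--form normalization of the norms used here reads $h_4=\|\R\|^2-\|\Ric\|^2+\tfrac14\Scal^2$. I expect the main obstacle to be purely a matter of bookkeeping: fixing the sign in $\ast\ast=\mathrm{id}$ on $(p,p)$ forms together with the normalization constants in both the inner--product identity and the Gauss--Bonnet expansion, so that the coefficients $1,-1,\tfrac14$ emerge rather than the physicists' $1,-4,1$ --- the discrepancy being exactly the factor $\tfrac14$ relating $\|\R\|^2$ in the double--form convention to $\sum R_{ijkl}^2$. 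Once these constants are settled, the stated identity follows immediately.
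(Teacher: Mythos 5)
Your proposal is correct and follows essentially the same route as the paper's own proof: both rewrite $\ast\,\dR_2=\g^{n-4}\R/(n-4)!$ via Proposition~\ref{index-free} together with $\ast\ast=\mathrm{id}$, apply the identity $\langle \omega_1,\omega_2\rangle=\ast\bigl(\omega_1\,(\ast\,\omega_2)\bigr)$ to reduce the inner product to $\frac{1}{(n-4)!}\ast\bigl(\g^{n-4}\R^2\bigr)$, and identify this scalar with the Gauss--Bonnet curvature $h_4$. Your added bookkeeping on normalizations and the alternative adjunction route are sound but do not change the argument.
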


\begin{proof}
This follows directly from the properties of the inner product of double forms as follows
\[
\langle \R, \dR_2 \rangle
=
\ast\!\left( (\ast\,\dR_2)\R \right)
=
\ast\!\left( \frac{\g^{n-4}\R^2}{(n-4)!} \right)
=
h_4.
\]
\end{proof}
\subsection{A uniqueness theorem for the generalized double duals of the Riemann tensor}

Let $\R$ denote the $(0,4)$ Riemann curvature tensor of a Riemannian manifold
$(M,\g)$, viewed as a symmetric $(2,2)$ double form, and let $\omega$ be a
$(p,p)$ double form on $M$.

\begin{definition}
A $(p,p)$ double form $\omega$ is said to be a \emph{linear polynomial with
respect to the Riemann tensor $\R$} if its components
$\omega_{i_1\ldots i_p,j_1\ldots j_p}$ with respect to any orthonormal frame are
given by universal (that is, frame-independent) homogeneous linear polynomials
in the components $R_{ijkl}$ of the Riemann curvature tensor.
\end{definition}

Typical examples of $(p,p)$ double forms that are linear polynomials in $\R$
include $\g^p\,\Scal$, $\g^{p-1}\Ric$, and $\g^{p-2}\R$. Using invariant theory,
Bivens~\cite{Bivens} showed that these three tensors span the entire space of
$\R$-linear polynomial $(p,p)$ double forms. More precisely, the following
result holds.

\begin{theorem}[{\cite[Theorem~3.1]{Bivens}}]
Let $(M,\g)$ be a Riemannian manifold of dimension $n \ge 4$, let $\R$ be its
Riemann curvature tensor, and let $p \ge 1$. The space of all $\R$-linear
polynomial $(p,p)$ double forms is a real vector space spanned by
\[
\{\g^p\,\Scal,\ \g^{p-1}\Ric,\ \g^{p-2}\R\}.
\]
Here we adopt the convention that $\g^{-1}=0$ and $\g^{0}=1$.
\end{theorem}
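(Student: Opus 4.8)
The plan is to recognize an $\R$-linear polynomial $(p,p)$ double form as an $O(n)$-equivariant linear map and to enumerate all such maps via the first fundamental theorem of invariant theory for the orthogonal group. Fix a point $x\in M$, write $V=T_xM$, and identify $V\cong V^{*}$ through the metric $\g$. Let $\mathcal{C}\subset V^{\otimes 4}$ be the space of algebraic curvature tensors, that is, the tensors sharing the symmetries of $\R$: antisymmetry in each of the two index pairs, symmetry under the pair swap, and the first Bianchi identity. By the very definition of a linear polynomial in $\R$, assigning to $\R$ a double form $\omega\in\Lambda^pV^{*}\otimes\Lambda^pV^{*}$ by a universal, frame-independent formula that is linear in the components $R_{ijkl}$ is the same datum as an $O(n)$-equivariant linear map $\Phi\colon\mathcal{C}\to\Lambda^pV^{*}\otimes\Lambda^pV^{*}$. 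The theorem thus asserts that $\Hom_{O(n)}(\mathcal{C},\Lambda^p\otimes\Lambda^p)$ is spanned, for $p\ge 2$, by the three maps $\R\mapsto\g^p\Scal$, $\R\mapsto\g^{p-1}\Ric$ and $\R\mapsto\g^{p-2}\R$, and by the first two when $p=1$. Since $\mathcal{C}$ is a direct summand of $V^{\otimes 4}$ as an orthogonal representation, any equivariant map out of $\mathcal{C}$ extends across the orthogonal projection $V^{\otimes 4}\to\mathcal{C}$, so it suffices to enumerate the equivariant maps out of $V^{\otimes 4}$ and then restrict them to $\mathcal{C}$.

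The second step applies the first fundamental theorem for $O(n)$: every $O(n)$-equivariant linear map $V^{\otimes 4}\to V^{\otimes 2p}$ is a linear combination of contraction (Brauer) diagrams matching the $4+2p$ tensor slots into $p+2$ pairs, each pair evaluated through the metric. I would classify these diagrams by the number $t$ of through-strands joining an input slot to an output slot. Writing $c$ for the number of input--input pairs (internal contractions of $\R$) and $m$ for the number of output--output pairs (metric factors in the target), the slot count gives
\[ 2c+t=4,\qquad 2m+t=2p,\qquad c+m+t=p+2. \]
Hence $t$ is even and $t\in\{0,2,4\}$. The case $t=0$ forces $c=2$, $m=p$ and contracts both index pairs of $\R$, yielding $\g^p\Scal$; the case $t=2$ forces $c=1$, $m=p-1$ and performs a single contraction, yielding $\g^{p-1}\Ric$; the case $t=4$ forces $c=0$, $m=p-2$ and leaves $\R$ uncontracted, yielding $\g^{p-2}\R$, which requires $p\ge2$ in accordance with the convention $\g^{-1}=0$.

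The third step is to check that, within each case, the symmetries of $\R$ together with the antisymmetrization onto $\Lambda^p\otimes\Lambda^p$ collapse every diagram to a single tensor up to scalar. The cases $t=0$ and $t=2$ are immediate: a complete contraction of $\R$ produces $\Scal$ uniquely, a single contraction produces $\Ric$ uniquely, routings that place the two Ricci indices in the same antisymmetric group vanish because $\Ric$ is symmetric, and the remaining metric factors can only reassemble into the double-form products $\g^p\Scal$ and $\g^{p-1}\Ric$. The pair-swap symmetry $R_{ijkl}=R_{klij}$ moreover forces every output to be a symmetric double form, so no further tensors arise. I expect the case $t=4$ to be the main obstacle: here the four free indices of $\R$ may a priori be distributed among the two antisymmetric groups in several inequivalent ways, and one must show that each routing reduces, after antisymmetrization, to a fixed multiple of $\g^{p-2}\R$. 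This is precisely where the first Bianchi identity is indispensable. Conceptually, after stripping off the $p-2$ common metric factors, each such routing is a $GL(V)$-equivariant map from $\mathcal{C}$ into $\Lambda^2V\otimes\Lambda^2V$; since $\mathcal{C}$ is the irreducible $GL(V)$-module of symmetry type $(2,2)$, which occurs in $\Lambda^2V\otimes\Lambda^2V$ with multiplicity one, every such map is a scalar multiple of the natural inclusion, and the routings collapse to multiples of $\g^{p-2}\R$.

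Finally I would confirm that the three tensors are linearly independent for a generic curvature tensor, so that they form a basis of $\Hom_{O(n)}(\mathcal{C},\Lambda^p\otimes\Lambda^p)$ rather than merely a spanning set. Evaluating on a curvature tensor with nonvanishing Weyl part but vanishing Ricci and scalar isolates $\g^{p-2}\R$, which is nonzero by the cancellation property of multiplication by powers of $\g$; evaluating on one with nonzero Ricci isolates $\g^{p-1}\Ric$; and evaluating on one with nonzero scalar isolates $\g^p\Scal$. This rules out any nontrivial relation. Combined with the enumeration above, it shows that the three maps span, and in fact form a basis of, the space of $\R$-linear polynomial $(p,p)$ double forms for $p\ge2$, the first two playing the same role when $p=1$.
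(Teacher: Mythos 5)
The paper does not prove this statement itself---it is quoted from Bivens \cite{Bivens}, whose argument proceeds by exactly the invariant-theoretic route you take (Weyl's first fundamental theorem for $O(n)$ applied to equivariant maps out of the space of algebraic curvature tensors), and your blind reconstruction is correct, including the two genuinely delicate points: the first Bianchi identity killing the $(4,0)$ and $(3,1)$ routings in the $t=4$ case, and the multiplicity-one occurrence of the $GL(V)$-module $S_{(2,2)}$ in $\Lambda^2 V\otimes\Lambda^2 V$ collapsing the crossed $(2,2)$ routing to a multiple of $\g^{p-2}\R$. One minor overstatement in your final paragraph: linear independence fails for $p>n-2$ (for instance when $p=n$ all three tensors are proportional in the one-dimensional space $\Lambda^n\otimes\Lambda^n$), so ``basis'' should be claimed only for $p\le n-2$---but the theorem asserts only spanning, which your enumeration establishes in all cases.
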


We use this result to prove that, in a generic geometric setting, the
generalized double dual tensors $\dR_p$ are the \emph{unique} divergence-free
linear polynomials in the Riemann tensor. We first treat the case $p=1$ and
then address higher values of $p$.

\begin{theorem}\label{uniqueness-p1}
Let $(M,\g)$ be a Riemannian manifold of dimension $n \ge 4$, let $\R$ be its
Riemann curvature tensor, and let
$\dR_1 = \tfrac12\,\Scal\,\g - \Ric$ denote the Einstein tensor. The space of all
divergence-free $\R$-linear polynomial $(1,1)$ double forms on $(M,\g)$ is a
real vector space spanned by
\begin{itemize}
\item[a)]
$\{\dR_1\}$ if the scalar curvature of $(M,\g)$ is not constant;
\item[b)]
$\{\dR_1,\ \Scal\,\g\}$ if the scalar curvature of $(M,\g)$ is constant.
\end{itemize}
\end{theorem}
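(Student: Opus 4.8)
The plan is to combine Bivens' classification of $\R$-linear $(1,1)$ double forms with a single divergence computation, thereby reducing the whole statement to the behaviour of $d\Scal$. By the theorem of Bivens quoted above, taken at $p=1$ with the convention $\g^{-1}=0$ (so that the $\g^{p-2}\R=\g^{-1}\R$ term is absent), every $\R$-linear polynomial $(1,1)$ double form is a universal linear combination $\omega = a\,\Scal\,\g + b\,\Ric$ with real constants $a,b$. Since $\dR_1 = \tfrac12\Scal\,\g - \Ric$, the pair $\{\dR_1,\,\Scal\,\g\}$ is obtained from $\{\Scal\,\g,\,\Ric\}$ by an invertible change of basis, so I would first rewrite the general element as $\omega = \alpha\,\dR_1 + \beta\,\Scal\,\g$ for universal constants $\alpha,\beta$.

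The next step is to compute the divergence of $\omega$. By Corollary~\ref{flat} the Einstein tensor is divergence-free, $\delta\dR_1 = 0$, so $\delta\omega = \beta\,\delta(\Scal\,\g)$. It then remains to evaluate $\delta(\Scal\,\g)$, and here I would use that the metric is parallel ($\nabla\g = 0$): in an orthonormal frame the divergence of the $(1,1)$ double form $\Scal\,\g$ reduces to $X \mapsto \pm\,X(\Scal)$, that is $\delta(\Scal\,\g) = \pm\,d\Scal$, with the sign fixed by the chosen convention for $\delta$ on double forms. Hence $\delta\omega = \pm\,\beta\,d\Scal$, and the divergence-free condition becomes the single scalar equation $\beta\,d\Scal \equiv 0$ on $M$.

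Finally I would split into the two cases of the statement. If $\Scal$ is not constant, then $d\Scal$ is nonzero at some point; since $\beta$ is a universal constant, the condition $\beta\,d\Scal \equiv 0$ forces $\beta = 0$, so $\omega = \alpha\,\dR_1$ and the space is spanned by $\{\dR_1\}$, proving (a). If $\Scal$ is constant, then $d\Scal = 0$ identically, the condition $\beta\,d\Scal \equiv 0$ holds for every $\beta$, and the full two-parameter family $\omega = \alpha\,\dR_1 + \beta\,\Scal\,\g$ is divergence-free, giving the span $\{\dR_1,\,\Scal\,\g\}$ and proving (b).

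The genuinely new input beyond Bivens' classification is small, so the only real care is in the divergence computation: one must pin down the convention for $\delta$ on $(1,1)$ double forms, confirm that $\delta(\Scal\,\g)$ is a nonzero multiple of $d\Scal$, and record that $\delta\dR_1 = 0$ is already supplied by Corollary~\ref{flat} (equivalently the contracted second Bianchi identity relating $\delta\Ric$ to $d\Scal$). I expect the main conceptual point to be the interpretation of the divergence-free requirement: because the coefficients are universal constants while $d\Scal$ depends on the manifold, the vanishing of $\beta\,d\Scal$ at a single point where $\Scal$ is non-stationary already forces $\beta = 0$, and this is exactly what separates case (a) from case (b).
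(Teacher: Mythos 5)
Your proposal is correct and follows essentially the same route as the paper: both arguments invoke Bivens' classification to write $\omega$ as a universal combination of $\Scal\,\g$ and $\Ric$, re-express it in the basis $\{\dR_1,\ \Scal\,\g\}$, use the divergence-freeness of the Einstein tensor (Corollary~\ref{flat}), and reduce $\delta\omega=0$ to the vanishing of a constant multiple of $d\Scal$, which separates the two cases exactly as you describe. Your extra care about the sign convention for $\delta(\Scal\,\g)$ is harmless since only the vanishing or nonvanishing of this term matters.
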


\begin{proof}
Let $\omega$ be a divergence-free $\R$-linear polynomial $(1,1)$ double form.
By Bivens’ theorem, $\omega$ can be written as
\[
\omega
=
\alpha\,\g\,\Scal + \beta\,\Ric
=
(\alpha+\tfrac{\beta}{2})\,\g\,\Scal - \beta\,\dR_1,
\]
for some real constants $\alpha$ and $\beta$. Since $\dR_1$ is divergence
free, we obtain
\[
\delta\omega = 0
\iff
(\alpha+\tfrac{\beta}{2})\,\delta(\g\,\Scal)
=
-(\alpha+\tfrac{\beta}{2})\,\widetilde D\Scal
=
0.
\]
This proves the result.\end{proof}

\begin{theorem}\label{uniqueness-thm}
Let $(M,\g)$ be a Riemannian manifold of dimension $n \ge 4$, let $\R$ be its
Riemann curvature tensor, and let $2 \le p \le n-2$. The space of all
divergence-free $\R$-linear polynomial $(p,p)$ double forms on $(M,\g)$ is a
real vector space spanned by
\begin{itemize}
\item[a)]
$\{\dR_p\}$ in the generic case where the Weyl tensor is not harmonic and the
scalar curvature is not constant;
\item[b)]
$\{\dR_p,\ \ast(\g^{n-p-1}\Ric),\ \ast(\g^{n-p}\Scal)\}$ when the Weyl tensor is
harmonic and the scalar curvature is constant;
\item[c)]
$\{\dR_p,\ \ast(\g^{n-p-1}A)\}$ when the Weyl tensor is harmonic and the scalar
curvature is not constant, where $A$ denotes the Schouten tensor;
\item[d)]
$\{\dR_p,\ \ast(\g^{n-p}\Scal)\}$ when the Weyl tensor is not harmonic and the
scalar curvature is constant.
\end{itemize}
\end{theorem}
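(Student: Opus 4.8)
The plan is to use the double star $\ast$ to trade the divergence-free condition for the vanishing of the second Bianchi operator $D$, where the Riemann tensor behaves trivially, and then to apply Bivens' theorem on the dual side. Since $\ast$ is a frame-independent linear isomorphism $\Lambda^{p,p}\to\Lambda^{n-p,n-p}$ carrying $\R$-linear polynomials to $\R$-linear polynomials, I would write every candidate as $\omega=\ast\psi$ with $\psi$ an $\R$-linear polynomial $(n-p,n-p)$ double form. By Bivens' theorem, applied in degree $n-p\ge 2$,
\[
\psi=a\,\g^{\,n-p}\Scal+b\,\g^{\,n-p-1}\Ric+c\,\g^{\,n-p-2}\R ,
\]
and Proposition~\ref{index-free} identifies the last term with $c\,(n-p-2)!\,\dR_p$. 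The identity $\ast\delta\ast=\pm D$ on $(r,r)$ double forms, already used in Corollary~\ref{flat} (see \cite{Labbi-Lapl-Lich}), gives $\ast(\delta\omega)=\pm D\psi$, so the problem reduces to solving $D\psi=0$.

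I would then compute $D\psi$ using $D\g=0$ and $D\R=0$, so that only the scalar and Ricci terms contribute. Substituting the Schouten decomposition $\Ric=(n-2)A+\frac{\Scal}{2(n-1)}\g$ and writing $D\Scal=\widetilde{D}\Scal$ (the differential of the scalar curvature, viewed as a $(0,1)$ double form) yields
\[
D\psi=\Bigl(a+\tfrac{b}{2(n-1)}\Bigr)\g^{\,n-p}\,\widetilde{D}\Scal+b(n-2)\,\g^{\,n-p-1}\,DA .
\]
At this point I would recall the classical fact that, for $n\ge 4$, the Weyl tensor $W$ is harmonic exactly when the Schouten tensor is Codazzi, i.e. $DA=0$; indeed $DA$ is a nonzero multiple of $\delta W$ (the Cotton tensor).

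The decisive step is the algebra that follows. Writing $\lambda=a+\frac{b}{2(n-1)}$ and $\mu=b(n-2)$, I would strip the common factor $\g^{\,n-p-1}$: multiplication by $\g^{\,n-p-1}$ is injective on $(1,2)$ double forms precisely because $1+2+(n-p-1)=n-p+2\le n$ when $p\ge 2$. This is exactly where the hypothesis $p\ge 2$ enters, and it explains why $p=1$ needed the separate Theorem~\ref{uniqueness-p1}. Hence $D\psi=0$ is equivalent to $\lambda\,\g\,\widetilde{D}\Scal+\mu\,DA=0$. Contracting this once and using $\cc(DA)=0$ (a contracted second Bianchi identity) together with $\cc(\g\,\widetilde{D}\Scal)=(n-1)\widetilde{D}\Scal$, the trace part decouples and forces $\lambda\,\widetilde{D}\Scal=0$; substituting back leaves $\mu\,DA=0$. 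Therefore $D\psi=0$ holds if and only if $\lambda=0$ or $\Scal$ is constant, and simultaneously $b=0$ or $W$ is harmonic.

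Reading off the four cases then finishes the proof. When $\lambda=0$ one has $a=-\frac{b}{2(n-1)}$, so $\psi=b(n-2)\,\g^{\,n-p-1}A$ and $\omega=\ast\psi$ is proportional to $\ast(\g^{\,n-p-1}A)$; this is how the Schouten tensor surfaces in case (c). In the generic case (a) both obstructions are active, forcing $a=b=0$ and leaving $\mathrm{span}\{\dR_p\}$; in case (b) both conditions are vacuous, so $a,b,c$ are free and the span is $\{\ast(\g^{\,n-p}\Scal),\ast(\g^{\,n-p-1}\Ric),\dR_p\}$; the mixed cases (c) and (d) give the two two-dimensional spans as stated. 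I expect the main obstacle to be precisely this decoupling step: recognizing that the two a priori independent obstructions $\g\,\widetilde{D}\Scal$ and $DA$ separate under a single contraction through $\cc(DA)=0$, while simultaneously controlling the injectivity of $\g$-multiplication, which both pins the theorem to $p\ge 2$ and dictates which extra generators survive in each degenerate case.
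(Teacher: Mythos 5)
Your proposal is correct and follows essentially the same route as the paper's own proof: dualize so that the divergence-free condition becomes the Codazzi condition $D\psi=0$, apply Bivens' theorem, strip the metric factor using the cancellation property of $\g$-multiplication (\cite[Proposition~2.3]{Labbi-double-forms}), pass to the Schouten tensor, and decouple the two obstructions by taking a trace and using that the Cotton tensor $DA$ is trace-free. If anything, your degree bookkeeping (working with $(n-p,n-p)$ forms on the dual side, so that the injectivity condition $1+2+(n-p-1)\le n$ pins down $p\ge 2$) is stated more carefully than in the paper, which phrases the same cancellation step as $p\le n-2$ after implicitly swapping $p\leftrightarrow n-p$.
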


\begin{proof}
Recall that a double form $\theta$ is divergence free if and only if its double
dual $\star\theta$ is Codazzi, that is, satisfies $D(\star\theta)=0$. It
therefore suffices to classify all Codazzi $\R$-linear polynomial $(p,p)$
double forms.

Let $\omega$ be such a form. By Bivens’ theorem, we may write
\[
\omega
=
\alpha\,\g^p\,\Scal
+
\beta\,\g^{p-1}\Ric
+
\gamma\,\g^{p-2}\R,
\]
for real constants $\alpha$, $\beta$, and $\gamma$. The condition $D\omega=0$
is equivalent to
\[
D\!\left[
\g^{p-1}(\alpha\,\g\,\Scal+\beta\,\Ric)
\right]
=
(-1)^{p-1}\,\g^{p-1}\,D(\alpha\,\g\,\Scal+\beta\,\Ric)
=
0.
\]
Since $p \le n-2$, we have $p-1+2+1<n+1$, and therefore by
\cite[Proposition~2.3]{Labbi-double-forms} we obtain
\begin{equation}\label{star}
D\omega = 0
\iff
D(\alpha\,\g\,\Scal+\beta\,\Ric) = 0.
\end{equation}

The Schouten tensor $A$ is related to the Ricci tensor by
\[
\Ric = (n-2)A + \frac{\Scal}{2(n-1)}\,\g.
\]
Substituting this into~\eqref{star} yields the equivalent condition
\begin{equation}\label{stars}
D\omega = 0
\iff
D\!\left[
\Bigl(\alpha+\frac{\beta}{2(n-1)}\Bigr)\,\g\,\Scal
+
\beta\,(n-2)A
\right]
=
0.
\end{equation}
Since the Cotton tensor $DA$ is trace free, taking the trace of the above
equation gives
\[
\Bigl(\alpha+\frac{\beta}{2(n-1)}\Bigr)\,n\,D(\Scal) = 0.
\]

From these relations, the following cases arise:
\begin{itemize}
\item
In the generic case where $DA\not\equiv 0$ and the scalar curvature is not
constant, one has $\alpha=\beta=0$, and hence
$\omega=\g^{p-2}\R$.
\item
If $DA=0$ and the scalar curvature is constant, then $D\omega=0$ for all real
values of $\alpha$, $\beta$, and $\gamma$.
\item
If $DA=0$ and the scalar curvature is not constant, then
$\beta=-2(n-1)\alpha$, and hence
\[
\omega
=
\alpha\,\g^{p-1}(\g\,\Scal-2(n-1)\Ric)
+
\gamma\,\g^{p-2}\R
=
-2(n-1)(n-2)\alpha\,\g^{p-1}A
+
\gamma\,\g^{p-2}\R.
\]
\item
If $DA\not\equiv 0$ and the scalar curvature is constant, then $\beta=0$, and
hence
\[
\omega
=
\alpha\,\g^p\,\cc^2\R
+
\gamma\,\g^{p-2}\R.
\]
\end{itemize}
Finally, recall that the condition $DA=0$ is equivalent to the Weyl tensor being
divergence free, and hence harmonic. This completes the proof.\end{proof}

\begin{remark}[Relation to Lovelock's theorem on divergence-free tensors]\label{lovelock-relation}

Theorems \ref{uniqueness-p1} and \ref{uniqueness-thm} provide a linear, higher-rank analogue of Lovelock's celebrated 
uniqueness theorem for divergence-free tensors \cite{Lovelock}. Lovelock's theorem 
classifies the most general symmetric, divergence-free second-rank tensors constructed 
from the metric and its first two derivatives yielding the Lovelock tensors $T_{2k}$, which include the metric  $T_0 = \mathbf{\g}$ and the Einstein tensor $T_2$).

Our results classify, for each $p$, the unique symmetric, divergence-free $(p,p)$ double form that is \emph{linear} in the Riemann curvature tensor $\R$. 
The metric $\g$ does not appear in our classification precisely because it is not linear in $\R$; it enters Lovelock's theorem as the term $T_0=\g$.

For $p=1$, we recover the Einstein tensor (and the metric only in the case when the scalar 
curvature is constant). For $p\geq 2$, we obtain the new higher-rank tensors $\dR_p$, forming a canonical hierarchy of parents of the Einstein tensor. In Section \ref{lovelock}, we extend this construction to obtain $p$-parents for all Lovelock tensors $T_{2q}$.
\end{remark}

\subsection{Self and anti-self duality of $\dR_p$}

The higher double duals of the Riemann tensor provide a characterization of
Einstein metrics and conformally flat metrics with zero scalar curvature in all
even dimensions, as follows.

\begin{theorem}[\cite{Labbi-double-forms, Labbi-these}]\label{self}
Let $(M,\g)$ be a Riemannian manifold of even dimension $n=2p \ge 4$. Define
$\ast(\dR_p)(\cdot,\cdot) := \dR_p(\ast\cdot,\ast\cdot)$. Then:
\begin{itemize}
\item[a)]
$(M,\g)$ is Einstein if and only if $\dR_p$ is self-dual, that is,
\[
\ast(\dR_p) = \dR_p;
\]
\item[b)]
$(M,\g)$ is conformally flat with zero scalar curvature if and only if $\dR_p$
is anti-self-dual, that is,
\[
\ast(\dR_p) = -\dR_p.
\]
\end{itemize}
\end{theorem}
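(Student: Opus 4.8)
The plan is to reduce the self- and anti-self-duality of $\dR_p$ to the Ricci decomposition of $\R$, and then to read off (a) and (b) from the way the double Hodge star distributes over the three irreducible blocks of the curvature. Specializing Proposition~\ref{index-free} to $n=2p$ (so that $n-p-2=p-2$) gives $\dR_p=\ast\,\omega$ with $\omega:=\frac{\g^{p-2}\R}{(p-2)!}$, itself a $(p,p)$ double form. On any $(p,p)$ double form the double star satisfies $\ast\ast=\big((-1)^{p(n-p)}\big)^{2}=\mathrm{id}$, and when $n=2p$ it preserves the bidegree $(p,p)$ (since $(p,p)\mapsto(n-p,n-p)=(p,p)$); hence $\ast$ is an involution of the space of $(p,p)$ double forms, with eigenvalues $\pm1$. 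Because $\ast\dR_p=\ast\ast\omega=\omega$, for $\varepsilon=\pm1$ we have $\ast(\dR_p)=\varepsilon\,\dR_p$ iff $\omega=\varepsilon\,\ast\omega$, i.e. iff $\ast\omega=\varepsilon\,\omega$. Thus it suffices to determine when $\omega$ is self-dual, respectively anti-self-dual.

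Next I would substitute the Ricci (Weyl) decomposition $\R=W+\tfrac{1}{n-2}\,\g Z+\tfrac{\Scal}{2n(n-1)}\,\g^{2}$, where $Z=\Ric-\tfrac{\Scal}{n}\,\g$ is the trace-free Ricci tensor and $W$ the Weyl tensor, both totally trace-free, so that $\omega=\tfrac{1}{(p-2)!}\big(\g^{p-2}W+\tfrac{1}{n-2}\,\g^{p-1}Z+\tfrac{\Scal}{2n(n-1)}\,\g^{p}\big)$. The computational core is the double-star identity for a metric power times a totally trace-free $(h,h)$ double form $\phi$, namely $\ast(\g^{k}\phi)=(-1)^{h}\,\tfrac{k!}{(n-2h-k)!}\,\g^{\,n-2h-k}\phi$, in the double-form calculus of \cite{Labbi-double-forms,Labbi-these}. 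In dimension $n=2p$ one has $n-2h-k=k$ for each of the three blocks ($h=2,k=p-2$; $h=1,k=p-1$; $h=0,k=p$), so every block is returned to a multiple of itself; being an eigenform of the involution $\ast$, that multiple must be $\pm1$, and the sign $(-1)^{h}$ equals $+1$ on the scalar block ($h=0$) and the Weyl block ($h=2$), and $-1$ on the trace-free Ricci block ($h=1$).

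Applying $\ast$ termwise therefore gives $\ast\omega=\tfrac{1}{(p-2)!}\big(\g^{p-2}W-\tfrac{1}{n-2}\,\g^{p-1}Z+\tfrac{\Scal}{2n(n-1)}\,\g^{p}\big)$, so that the self-dual part of $\omega$ is the scalar-plus-Weyl piece and its anti-self-dual part is the trace-free Ricci piece. Hence $\ast\omega=\omega$ iff $\g^{p-1}Z=0$, and $\ast\omega=-\omega$ iff $\g^{p-2}\big(W+\tfrac{\Scal}{2n(n-1)}\g^{2}\big)=0$. Since the relevant bidegrees are small enough for multiplication by these powers of $\g$ to be injective (\cite[Proposition~2.3]{Labbi-double-forms}), the first condition is $Z=0$, i.e. $(M,\g)$ is Einstein, while the second is $W+\tfrac{\Scal}{2n(n-1)}\g^{2}=0$, whose trace-free and pure-trace parts give $W=0$ and $\Scal=0$, i.e. $(M,\g)$ is conformally flat with zero scalar curvature. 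Combined with the equivalence $\ast\omega=\varepsilon\,\omega\iff\ast(\dR_p)=\varepsilon\,\dR_p$ of the first step, this yields (a) for $\varepsilon=+1$ and (b) for $\varepsilon=-1$.

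The main obstacle is establishing the sign pattern in the metric-star identity — precisely that the trace-free Ricci block alone reverses sign under $\ast$ while the scalar and Weyl blocks are fixed — since this is exactly what distinguishes the Einstein characterization from the conformally-flat one. As a guiding check, in dimension four ($p=2$, $\omega=\R$) the double star acts as conjugation of the curvature operator by the map that is $+\mathrm{Id}$ on self-dual $2$-forms and $-\mathrm{Id}$ on anti-self-dual $2$-forms; this negates the off-diagonal (trace-free Ricci) block and fixes the diagonal (Weyl and scalar) blocks, recovering the classical fact that $\dR_2=\R$ exactly for Einstein metrics and reproducing the four-dimensional identity for $\dR$ displayed above. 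Everything else — the reduction to $\omega$, the eigenform structure forced by $n-2h-k=k$, and the final injectivity step — is formal once the involution property and the Ricci decomposition are in hand.
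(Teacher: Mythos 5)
Your proof is correct: the reduction $\ast(\dR_p)=\varepsilon\,\dR_p \iff \ast\omega=\varepsilon\,\omega$ via $\ast\ast=\mathrm{id}$, the blockwise sign $(-1)^h$ of the double star on $\g^k\times$(trace-free $(h,h)$) pieces — which matches the signs appearing in Corollary~\ref{same-weyl} and Equation~\eqref{decomp} — and the final injectivity step from \cite[Proposition~2.3]{Labbi-double-forms} are exactly the ingredients behind the statement. The paper quotes this theorem from \cite{Labbi-double-forms,Labbi-these} without reproducing the proof, but your argument is precisely the one its own machinery (Proposition~\ref{index-free} together with the orthogonal decomposition of Corollary~\ref{same-weyl}, where at $n=2p$ the Einstein part changes sign under $\ast$ while the Weyl and scalar parts are fixed) delivers, so it is essentially the same approach.
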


\subsection{Orthogonal decomposition of the tensors $\dR_p$}

The following useful formula follows directly from Formula~(15) in
\cite{Labbi-double-forms}.

\begin{proposition}\label{p-curvature-formula}
For $0 \le p \le n-2$, one has
\begin{equation}\label{dRpp}
\dR_p
=
\ast\!\left(\frac{\g^{n-p-2}\R}{(n-p-2)!}\right)
=
\frac{\g^{p-2}\R}{(p-2)!}
-
\frac{\g^{p-1}\Ric}{(p-1)!}
+
\frac{1}{2}\frac{\g^{p}}{p!}\Scal.
\end{equation}
Here we use the convention that $\g^{-1}=0$ and $\g^0=1$. In particular, $\dR_1$
coincides with the Einstein tensor and $\dR_2$ satisfies the following identity
\begin{equation}\label{dR2}
\dR_2 = \R - \g\,\Ric + \frac{1}{4}\,\g^2\,\Scal.
\end{equation}
\end{proposition}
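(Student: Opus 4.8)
The first equality is exactly the content of Proposition~\ref{index-free}, so the whole task reduces to establishing the orthogonal decomposition, that is, the second equality. The plan is to treat this as a direct application of the master Hodge-star identity for double forms (Formula~(15) in \cite{Labbi-double-forms}), which expresses the Hodge dual of $\g^{q}\omega$, for a $(2,2)$ double form $\omega$, as a \emph{finite} linear combination of terms of the form $\g^{a_k}\,\cc^{k}\omega$, the only parameter being the number $k$ of contractions applied to $\omega$. The conceptual reason the final formula has exactly three terms is that the Riemann tensor admits only three nonzero iterated contractions, namely $\cc^{0}\R=\R$, $\cc\R=\Ric$, and $\cc^{2}\R=\Scal$, while $\cc^{k}\R=0$ for $k\ge 3$. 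Thus the general expansion automatically truncates after the scalar term, and no Weyl-type decomposition of $\R$ is needed as an input.

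Concretely, I would apply the identity with $\omega=\R$ and $q=n-p-2$, so that $\g^{q}\R$ lies in $\Lambda^{n-p,n-p}$ and its dual lies in $\Lambda^{p,p}$, as required. The engine that produces the three displayed coefficients is twofold: first the general $\ast(\g^{q}\,\cdot\,)$ formula, and second the commutation relation between the contraction $\cc$ and the exterior multiplication $L$ by the metric $\g$, namely $[\cc,L]=(n-2r)\,\mathrm{Id}$ on $\Lambda^{r,r}$, which is used to push the $k$ contractions through the factor $\g^{q}$ and thereby reduce everything to $\g$-powers times $\R$, $\Ric$, and $\Scal$. Carrying out this bookkeeping yields the degrees $p-2$, $p-1$, $p$ on the $\g$-factors and the coefficients $\tfrac{1}{(p-2)!}$, $-\tfrac{1}{(p-1)!}$, $\tfrac{1}{2\,p!}$, with the alternating sign tracking the parity $(-1)^{k}$ of the number of contractions.

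It then remains to check the boundary behaviour and to record the special cases. For $p=0$ and $p=1$ the conventions $\g^{-1}=0$ and $\g^{0}=1$ suppress the $\R$-term and, for $p=0$, also the $\Ric$-term, and one reads off $\dR_{0}=\tfrac12\Scal$ and $\dR_{1}=\tfrac12\Scal\,\g-\Ric$, the Einstein tensor; setting $p=2$ gives $\dR_{2}=\R-\g\,\Ric+\tfrac14\g^{2}\Scal$, which is \eqref{dR2}. As an independent consistency check I would verify that the right-hand side satisfies the hereditary contraction relation $\cc(\dR_{p})=(n-p-1)\dR_{p-1}$ of \eqref{trace}: applying $\cc$ termwise (again through $[\cc,L]=(n-2r)\,\mathrm{Id}$) and matching with the formula for $p-1$ provides a stringent check and, together with the base case $p=0$, re-derives the three coefficients.

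I expect the main obstacle to be purely computational rather than conceptual: correctly tracking the factorials and signs produced by composing the general Hodge-star expansion with the $\cc$--$L$ commutation relations, and confirming that the expansion genuinely truncates to three terms across the entire admissible range $0\le p\le n-2$, so that no spurious terms with negative $\g$-exponents appear. Once the coefficient bookkeeping is organized, for instance by computing $\cc^{k}(\g^{n-p-2}\R)$ in closed form before applying the dual, the identity follows without further input.
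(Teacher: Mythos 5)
Your proposal is correct and follows essentially the same route as the paper, which proves the proposition by a one-line appeal to Formula~(15) of \cite{Labbi-double-forms} (the same master identity expanding $\ast(\g^{q}\omega)$ into $\g$-powers times iterated contractions $\cc^{k}\omega$, as also used in Equation~\eqref{dRcR} with $q=1$). Your additional sketch of the mechanism behind that formula via the commutator $[\cc,L]=(n-2r)\,\mathrm{Id}$ on $(r,r)$ double forms, and the consistency check against the trace relation~\eqref{trace}, are sound but go beyond what the paper records.
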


Equation~\eqref{dR2} reduces in dimension $4$ to the classical Ruse--Lanczos
identity \cite[Proposition~3.1]{GouOtt} and is equivalent in higher dimensions
to the generalized Ruse--Lanczos identity of \cite[equation~A.14]{Senovilla}.

Recall the orthogonal decomposition of the Riemann tensor into irreducible
components,
\[
\R = \omega_2 + \g\,\omega_1 + \g^2\,\omega_0,
\]
where $\omega_2 = W$ is the Weyl tensor,
$\omega_1 = \frac{1}{n-2}(\Ric - \frac{\Scal}{n}\g)$ is the trace-free Ricci
component, and $\omega_0 = \frac{\Scal}{2n(n-1)}$.

It follows that the generalized double dual tensors decompose into orthogonal
components as follows.

\begin{corollary}\label{same-weyl}
For $0 \le p \le n-2$, one has
\[
\dR_p
=
\frac{1}{(p-2)!}\,\g^{p-2}\omega_2
-
\frac{n-p-1}{(p-1)!}\,\g^{p-1}\omega_1
+
\frac{(n-p)(n-p-1)}{p!}\,\g^{p}\omega_0.
\]
We use the convention that $\g^{-1}=0$ and $\g^0=1$. In particular, the Riemann tensor $\R$ and its double dual $\dR_2$ have the same
Weyl (trace-free) component. More precisely,
\[
\dR_2
=
\omega_2
-
(n-3)\,\g\,\omega_1
+
\frac{(n-2)(n-3)}{2}\,\g^2\,\omega_0.
\]
\end{corollary}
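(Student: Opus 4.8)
The plan is to derive the stated decomposition directly from the explicit formula~\eqref{dRpp} of Proposition~\ref{p-curvature-formula} by substituting the irreducible decomposition $\R=\omega_2+\g\,\omega_1+\g^2\,\omega_0$ and collecting terms according to their component type. First I would re-express the Ricci tensor and the scalar curvature in terms of the irreducible pieces. From $\omega_0=\tfrac{\Scal}{2n(n-1)}$ one gets $\Scal=2n(n-1)\,\omega_0$, and from $\omega_1=\tfrac{1}{n-2}\bigl(\Ric-\tfrac{\Scal}{n}\,\g\bigr)$ together with the previous identity one obtains
\[
\Ric=(n-2)\,\omega_1+2(n-1)\,\omega_0\,\g,
\]
as $(1,1)$ double forms. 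Inserting these, together with $\R=\omega_2+\g\,\omega_1+\g^2\,\omega_0$, into~\eqref{dRpp} produces three groups of terms proportional to $\g^{p-2}\omega_2$, $\g^{p-1}\omega_1$, and $\g^p\omega_0$ respectively (all products being exterior products of double forms, with the conventions $\g^{-1}=0$ and $\g^0=1$ handling the low-order cases $p=0,1$).

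The substantive step is the matching of the factorial coefficients. The Weyl term is immediate: only the first summand of~\eqref{dRpp} contributes, yielding $\tfrac{1}{(p-2)!}\,\g^{p-2}\omega_2$. For the $\omega_1$ term, the contributions $\tfrac{1}{(p-2)!}$ and $-\tfrac{n-2}{(p-1)!}$ combine, using $\tfrac{1}{(p-2)!}=\tfrac{p-1}{(p-1)!}$, to give coefficient
\[
\frac{(p-1)-(n-2)}{(p-1)!}=-\frac{n-p-1}{(p-1)!}.
\]
For the $\omega_0$ term, the three contributions $\tfrac{1}{(p-2)!}$, $-\tfrac{2(n-1)}{(p-1)!}$, and $\tfrac{n(n-1)}{p!}$ reduce, after writing everything over $p!$, to the numerator $p(p-1)-2(n-1)p+n(n-1)$, which I would verify equals $(n-p)(n-p-1)$; this gives the coefficient $\tfrac{(n-p)(n-p-1)}{p!}$. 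Assembling the three groups yields the asserted formula, and specializing to $p=2$ (using $0!=1!=1$) recovers the displayed identity for $\dR_2$.

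Finally, the orthogonality of the three resulting summands is inherited from the orthogonality of $\omega_2,\omega_1,\omega_0$ in the decomposition of $\R$: exterior multiplication by powers of $\g$ keeps the components in distinct irreducible summands. The only genuine obstacle here is careful bookkeeping of the factorials together with the single algebraic identity $p(p-1)-2(n-1)p+n(n-1)=(n-p)(n-p-1)$; everything else is a direct substitution.
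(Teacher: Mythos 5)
Your proposal is correct and follows exactly the route the paper leaves implicit: substitute $\R=\omega_2+\g\,\omega_1+\g^2\,\omega_0$, $\Ric=(n-2)\,\omega_1+2(n-1)\,\omega_0\,\g$, and $\Scal=2n(n-1)\,\omega_0$ into the formula~\eqref{dRpp} of Proposition~\ref{p-curvature-formula} and collect coefficients, the key identity $p(p-1)-2(n-1)p+n(n-1)=(n-p)(n-p-1)$ checking out, with the conventions $\g^{-1}=0$, $\g^{0}=1$ correctly handling $p=0,1$. Your coefficient computations and the specialization to $p=2$ are all accurate, so there is nothing to add.
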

\section{Sectional curvatures of the generalized double duals of Riemann}
In this section, we introduce the  sectional curvature associated with the generalized double duals of the Riemann curvature tensor. In complete analogy with the classical sectional curvature of the Riemann tensor, these invariants are defined on Grassmannian bundles of tangent planes and capture curvature information through the evaluation of higher–order curvature tensors on orthonormal frames. We investigate their basic properties and show that they encode significant geometric information, including positivity and non-negativity properties with strong geometric and topological consequences.

\begin{definition}
The \emph{sectional curvature} of the double dual tensor $\dR_p$, denoted by $s_p$ and referred to as the \emph{$p$-curvature}, is a function defined on the Grassmannian bundle of tangent $p$-planes.  
For a tangent $p$-plane $P$, and for any orthonormal basis $\{e_1,\dots,e_p\}$ of $P$, we define
\[
s_p(P):=2\,\dR_p(e_1,\dots,e_p;e_1,\dots,e_p).
\]
\end{definition}

The factor $2$ is introduced so that this definition coincides with the $p$-curvature  that was extensively studied by the author, beginning with his 1995 PhD thesis \cite{Labbi-these}, and subsequently in a series of papers (see for instance \cite{Labbi-Toulouse, Labbi-AGAG, Labbi-Botvinnik}).  
In what follows, we establish  new properties of these invariants that were not covered in the earlier works.

\begin{proposition}\label{dR-to-R}
\begin{enumerate}
\item The double dual tensor $\dR_2$ determines the Riemann curvature tensor $\R$. More precisely,
\begin{equation}
\R=\dR_2-\frac{1}{n-3}\g \, \cc( \dR_2)+\frac{1}{2(n-2)(n-3)}\g\, \cc^{2}(\dR_2).
\end{equation}
\item The sectional curvature $s_2$ of the double dual $\dR_2$ determines the Riemann curvature tensor $\R$.
\end{enumerate}
\end{proposition}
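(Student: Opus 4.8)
The plan is to treat the two assertions separately: part (1) is a direct algebraic inversion obtained from Equation~\eqref{dR2} together with the hereditary contraction relation~\eqref{trace}, while part (2) reduces to part (1) once one knows that a curvature-type double form is recovered from its sectional curvature. For part (1) I would start from the identity $\dR_2 = \R - \g\,\Ric + \tfrac14\,\g^2\,\Scal$ of Proposition~\ref{p-curvature-formula} and rearrange it as
\[
\R = \dR_2 + \g\,\Ric - \tfrac14\,\g^2\,\Scal,
\]
so that everything hinges on expressing $\Ric$ and $\Scal$ in terms of the successive contractions of $\dR_2$.

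To do this I would use the hereditary relation $\cc(\dR_p)=(n-p-1)\dR_{p-1}$ from~\eqref{trace}, together with $\dR_1=\tfrac12\Scal\,\g-\Ric$ and $\dR_0=\tfrac12\Scal$, to obtain
\[
\cc(\dR_2)=(n-3)\bigl(\tfrac12\Scal\,\g-\Ric\bigr),
\qquad
\cc^2(\dR_2)=\tfrac{(n-2)(n-3)}{2}\,\Scal.
\]
Since $n\ge 4$, the factors $n-3$ and $(n-2)(n-3)$ are nonzero, so these invert to give $\Scal$ as a multiple of $\cc^2(\dR_2)$ and $\Ric=\tfrac12\Scal\,\g-\tfrac1{n-3}\,\cc(\dR_2)$. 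Substituting back, and treating multiplication by $\g$ as the exterior product of double forms so that $\g\,\Ric=\tfrac12\Scal\,\g^2-\tfrac1{n-3}\,\g\,\cc(\dR_2)$, the two scalar-curvature terms combine and, after replacing $\Scal$ by its expression in $\cc^2(\dR_2)$, yield the stated inversion formula. This step is purely routine; the only thing to watch is the bookkeeping of the exterior powers of $\g$ and of the numerical constants.

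For part (2) the key point is that $\dR_2$ is a symmetric $(2,2)$ double form satisfying the first Bianchi identity. I would invoke the classical polarization principle that such a tensor $T$ is uniquely determined by its diagonal values $T(X,Y;X,Y)$. The sectional curvature $s_2(P)=2\,\dR_2(e_1,e_2;e_1,e_2)$ records exactly this diagonal quadratic form on decomposable bivectors; polarizing once in the first argument pair and once in the second, and then using the pair symmetries together with the first Bianchi identity, recovers every component $\dR_2(X,Y;Z,W)$. Hence $s_2$ determines $\dR_2$, and by part (1) $\dR_2$ in turn determines $\R$.

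The main obstacle is precisely this polarization step: it is here that the first Bianchi identity is indispensable, since without it the diagonal values $T(X,Y;X,Y)$ fail to reconstruct the full tensor, exactly as in the classical theorem that the sectional curvature determines the Riemann tensor. Part (1), by contrast, presents no real difficulty beyond keeping track of the constants, and it uses only $n\ge 4$ to ensure that the contraction relations can be inverted.
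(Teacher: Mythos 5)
Your proposal is correct and follows essentially the same route as the paper: part (1) contracts the identity $\dR_2=\R-\g\,\Ric+\tfrac14\,\g^2\,\Scal$ to get $\cc(\dR_2)=(n-3)\bigl(\tfrac12\Scal\,\g-\Ric\bigr)$ and $\cc^2(\dR_2)=\tfrac{(n-2)(n-3)}{2}\Scal$ and substitutes back, and part (2) is exactly the standard polarization argument the paper invokes, resting on the symmetry and first Bianchi identity of $\dR_2$, followed by part (1). Note only that your (correct) bookkeeping yields $\tfrac{1}{2(n-2)(n-3)}\,\g^{2}\,\cc^{2}(\dR_2)$ in the last term, which is what the formula must read by valence count, so the single $\g$ printed in the statement is evidently a typo.
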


\begin{proof}
The first assertion follows directly by contracting the identity of Proposition~\ref{p-curvature-formula}. Indeed, one obtains
\[
\cc (\dR_2)=(n-3)\Big(\frac{1}{2}\Scal\, \g-\Ric\Big),\qquad 
\cc^{2}(\dR_2)=\frac{(n-2)(n-3)}{2}\Scal.
\]
The second assertion follows from a standard argument, analogous to the classical case of $\R$, showing that the sectional curvature $s_2$ determines the tensor $\dR_2$.  
Recall that $\dR_2$ satisfies the first Bianchi identity. The conclusion then follows from the first part of the proposition.
\end{proof}

As a consequence, all higher double duals $\dR_p$, for $p>2$, are determined by $\dR_2$, or even by its sectional curvature $s_2$.

\medskip

Taking the sectional curvature of both sides of the identity in Proposition~\ref{p-curvature-formula}, we obtain, for $0\le p\le n-2$,
\begin{equation*}
\begin{split}
\frac{1}{2}s_p
&=\frac{1}{(n-p-2)!}\ast\big(\g^{n-p-2}\R\big)(e_1\wedge\cdots\wedge e_p; e_1\wedge\cdots\wedge e_p)\\
&=\frac{\g^{p-2}\R}{(p-2)!}(e_1\wedge\cdots\wedge e_p; e_1\wedge\cdots\wedge e_p)
-\frac{\g^{p-1}\Ric}{(p-1)!}(e_1\wedge\cdots\wedge e_p; e_1\wedge\cdots\wedge e_p)\\
&\hspace{4.5cm}
+\frac{\Scal}{2}\frac{\g^{p}}{p!}(e_1\wedge\cdots\wedge e_p; e_1\wedge\cdots\wedge e_p)\\
&=\sum_{1\le i<j\le p}R(e_i,e_j,e_i,e_j)
-\sum_{i=1}^p\Ric(e_i,e_i)+\frac{1}{2}\Scal.
\end{split}
\end{equation*}
Here, the vectors $\{e_1,\dots,e_p\}$ form an orthonormal family. We have therefore proved the following useful formula.

\begin{lemma}
For $2\le p\le n-2$, one has
\begin{equation}\label{new-formula}
s_p(e_1,\dots,e_p)
=\Scal-2\sum_{i=1}^p\Ric(e_i,e_i)
+2\sum_{1\le i<j\le p}K(e_i,e_j).
\end{equation}
\end{lemma}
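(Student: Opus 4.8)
The plan is to derive the formula for $s_p$ directly from the orthogonal decomposition identity of Proposition~\ref{p-curvature-formula}, namely
\[
\dR_p
=
\frac{\g^{p-2}\R}{(p-2)!}
-
\frac{\g^{p-1}\Ric}{(p-1)!}
+
\frac{1}{2}\frac{\g^{p}}{p!}\Scal,
\]
by evaluating both sides on the $p$-plane $P$ with orthonormal basis $\{e_1,\dots,e_p\}$ and multiplying by the factor $2$ from the definition of $s_p$. The entire content of the statement reduces to computing, for a general symmetric $(q,q)$ double form $\theta$ (here $\theta = \R$ a $(2,2)$ form, $\theta = \Ric$ a $(1,1)$ form, and $\theta = \Scal$ a $(0,0)$ form), the value of $\g^{p-q}\theta$ evaluated on $(e_1\wedge\cdots\wedge e_p;\,e_1\wedge\cdots\wedge e_p)$.

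First I would recall the combinatorial formula for evaluating an exterior product of double forms on decomposable elements: for double forms of appropriate bidegrees, $(\g^{p-q}\theta)(e_1\wedge\cdots\wedge e_p; e_1\wedge\cdots\wedge e_p)$ expands as a sum over size-$q$ subsets $S\subseteq\{1,\dots,p\}$ of $\theta$ evaluated on the corresponding subfamily, with the complementary indices contracted against the factors of $\g$. Since $\{e_1,\dots,e_p\}$ is orthonormal, each factor of $\g$ simply forces matching of indices, so the cross terms vanish and one is left with a clean sum of diagonal evaluations. Carrying this out for each of the three terms yields, after the $(p-q)!$ normalizations cancel the multiplicities, exactly
\[
\sum_{1\le i<j\le p}R(e_i,e_j,e_i,e_j),
\qquad
\sum_{i=1}^p\Ric(e_i,e_i),
\qquad
\tfrac12\Scal,
\]
for $\theta=\R,\ \Ric,\ \Scal$ respectively. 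These three computations are precisely the lines already displayed immediately before the lemma in the excerpt, so the proof is essentially the assertion that $\tfrac12 s_p$ equals that displayed sum; multiplying through by $2$ and writing $K(e_i,e_j)=R(e_i,e_j,e_i,e_j)$ for the sectional curvature gives the claimed formula \eqref{new-formula}.

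The main obstacle, such as it is, lies entirely in the bookkeeping of the combinatorial expansion of $\g^{p-q}\theta$ on an orthonormal family, in particular verifying that the binomial multiplicities produced by the exterior product exactly cancel the factorial normalizations $1/(p-2)!$, $1/(p-1)!$, $1/p!$ and that all off-diagonal contractions of $\g$ against distinct orthonormal vectors vanish. This is the standard evaluation lemma for the exterior algebra of double forms and is recorded in the author's earlier work on double forms; I would invoke it rather than re-derive it, since the three displayed lines preceding the lemma already carry out the substitution explicitly. Hence the proof is immediate once those evaluations are in hand, and amounts to reading off \eqref{new-formula} from the chain of equalities computing $\tfrac12 s_p$.
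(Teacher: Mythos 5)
Your proposal is correct and follows exactly the paper's own route: the paper proves the lemma precisely by evaluating the decomposition of Proposition~\ref{p-curvature-formula} on the orthonormal family $e_1\wedge\cdots\wedge e_p$ (the displayed computation of $\tfrac12 s_p$ immediately preceding the lemma) and multiplying by $2$, with the standard evaluation of $\g^{p-q}\theta$ on orthonormal decomposables handling the combinatorics just as you describe. No gaps; the two arguments coincide.
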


\begin{remark}[$C_p$-curvatures.]
Recall that the $C_p$-curvature invariants introduced by Brendle--Hirsch--Johne \cite{BHJ} are defined at a tangent $p$-plane $P$ by
\[
2C_p(P)=\Scal-s_p(P).
\]
Consequently, one obtains the equivalent expression
\begin{equation}
C_p(P)(e_1,\dots,e_p)
=\sum_{i=1}^p\Ric(e_i,e_i)
-\sum_{1\le i<j\le p}K(e_i,e_j).
\end{equation}
Thus, $C_p(P)$ measures the difference between the intermediate $p$-Ricci curvature of $P$ and the $(n-p)$-curvature of the orthogonal $(n-p)$-plane.

Another observation is that $C_p$ arises as the sectional curvature of the curvature tensor
\[
\frac{\g^{p-1}\Ric}{(p-1)!}-\frac{\g^{p-2}\R}{(p-2)!},
\]
which satisfies the first Bianchi identity. This tensor is closely related to the Weitzenb\"ock curvature tensor \cite{Labbi-nachr}; the two differ only by a factor~$2$:
\[
\frac{\g^{p-1}\Ric}{(p-1)!}-2\frac{\g^{p-2}\R}{(p-2)!}.
\]
\end{remark}
\subsection{Positivity properties of the $p$-curvatures}

As explained in \cite{Labbi-strat}, curvature conditions that are stable under surgeries, satisfy the ideal property with respect to Cartesian products, and enjoy a fiber bundle property are particularly subtle and deserve special attention.  
These three properties are fundamental in converting geometric classification problems into topological ones.  
This perspective partly explains why these curvatures have attracted sustained interest and were studied extensively, for instance in \cite{Labbi-these, Labbi-AGAG, Labbi-Botvinnik, BSW}.

Surgery stability was established in \cite{Labbi-AGAG}. Below, we prove that positivity of the sectional curvature $s_p$ of $\dR_p$ on a compact manifold satisfies a fiber bundle property and, consequently, the ideal property.

\subsection{Fiber bundle property}

\begin{theorem}\label{fbp}
Let $M$ be compact and let $\pi:(M,g)\to (B,\check g)$ be a Riemannian submersion with totally geodesic fibers $F_x$, $x\in B$.  
Assume that $\dim F_x\ge p+2$ for some $p\ge 0$.  
If the induced metric on the fibers has positive $p$-curvature, then the total space $M$ admits a Riemannian metric with positive $p$-curvature.
\end{theorem}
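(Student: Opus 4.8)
The plan is to reduce positivity of $s_p$ to a statement about sums of sectional curvatures over $(n-p)$-planes, and then to manufacture the desired metric by the canonical variation that shrinks the totally geodesic fibers. Starting from the pointwise formula~\eqref{new-formula}, I complete an orthonormal basis $\{e_1,\dots,e_p\}$ of a $p$-plane $P$ to an orthonormal basis $\{e_1,\dots,e_n\}$ of $T_xM$ and expand both $\Scal$ and the $\Ric$ terms as sums of sectional curvatures; after cancellation this collapses to the clean identity
\[
s_p(P)=2\sum_{p<a<b\le n}K(e_a,e_b),
\]
so that $s_p(P)$ is twice the restricted scalar curvature $\sigma(P^{\perp}):=\sum_{a<b}K(e_a,e_b)$ of the orthogonal $(n-p)$-plane $P^{\perp}$. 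Positive $p$-curvature is thus equivalent to $\sigma(Q)>0$ for every $(n-p)$-plane $Q$. I also record a monotonicity lemma I will need: since each basis $2$-plane of an $m$-plane lies in exactly $\binom{m-2}{m'-2}$ of its $m'$-dimensional coordinate subspaces, positivity of $\sigma$ on all $m'$-planes forces positivity on all $m$-planes with $m\ge m'$. In particular, positive $p$-curvature of a fiber of dimension $d\ge p+2$ means exactly that $\sigma_F>0$ on every $(d-p)$-plane in $T_xF$, hence on every vertical subspace of dimension $\ge d-p\ge 2$ (this is where the hypothesis $\dim F_x\ge p+2$ first enters, making the fiber condition nonvacuous).

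Next I introduce the canonical variation $g_t=g|_{\mathcal H}\oplus t^2 g|_{\mathcal V}$, where $\mathcal H,\mathcal V$ are the horizontal and vertical distributions. Since the fibers are totally geodesic the O'Neill tensor $T$ vanishes, and the standard O'Neill/canonical-variation formulas give, for $g$-unit vectors, that the purely vertical $(0,4)$-curvature equals $t^2$ times the intrinsic fiber curvature, the purely horizontal curvature stays bounded (converging to the base curvature), and every mixed component is $O(t^2)$ with constants controlled by the curvature and O'Neill tensor of $(M,g)$, hence uniformly bounded on the compact $M$. For a fixed $(n-p)$-plane $Q$ I diagonalise $Q$ against the splitting by principal angles: there is a $g$-orthonormal basis $q_i=\cos\theta_i\,v_i+\sin\theta_i\,h_i$ of $Q$ with $v_i\in\mathcal V$, $h_i\in\mathcal H$ orthonormal systems, and these $q_i$ are automatically $g_t$-orthogonal with $|q_i|_{g_t}^2=\lambda_i:=t^2\cos^2\theta_i+\sin^2\theta_i$. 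Normalising $e_i=q_i/\sqrt{\lambda_i}$, the quantity $\sigma_t(Q)=\sum_{i<j}\lambda_i^{-1}\lambda_j^{-1}R_t(q_i,q_j,q_i,q_j)$ splits into an all-vertical leading part, a bounded all-horizontal part, and mixed remainders.

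The heart of the argument is the leading part. Writing $\rho_i:=t^2\cos^2\theta_i/\lambda_i\in[0,1]$ (with $\rho_i=1$ precisely when $q_i\in\mathcal V$), the all-vertical contribution equals $\tfrac1{t^2}f(\rho)$ with $f(\rho)=\sum_{i<j}\rho_i\rho_j\,K_F(v_i,v_j)$. The indices with $\theta_i=0$ span $Q\cap\mathcal V$, whose dimension is at least $\dim Q+\dim\mathcal V-n=d-p$, and for these $\rho_i=1$. The key observation is that $f$ is affine in each remaining variable $\rho_j$, so its minimum over the cube is attained at a vertex, where the active indices form a set $S$ containing all the vertical ones; there $f=\sigma_F$ of a fiber subspace of dimension $|S|\ge d-p$, which is positive by the fiber hypothesis together with the monotonicity lemma. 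Compactness of $M$ and of the relevant Grassmannians then yields a uniform bound $f(\rho)\ge c_0>0$, so the leading part is at least $c_0/t^2$. Meanwhile the bounded horizontal part and the mixed remainders are $O(1/t)$ uniformly, since every non-vertical term carries at least one factor $\sin\theta_i$ or one extra $t^2$ from the mixed $(0,4)$-components, which offsets the dangerous factors $\lambda_i^{-1}$. Hence $\sigma_t(Q)\ge c_0/t^2-C/t>0$ for all $Q$ and all $x\in M$ once $t$ is small, and $(M,g_t)$ has positive $p$-curvature.

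The main obstacle is precisely the presence of \emph{nearly vertical} directions $q_i$ whose principal angle $\theta_i$ is of the same order as $t$: such directions feed individual fiber sectional curvatures $K_F(v_i,v_j)$—which need not be positive—into $\sigma_t(Q)$ at the dominant order $1/t^2$, so the naive heuristic "vertical curvatures blow up positively" fails outright. The convexity (vertex) reduction combined with the monotonicity lemma is exactly what tames them, converting the product-weighted combination $f(\rho)$ into honest restricted scalar curvatures of genuine $(\ge d-p)$-dimensional fiber subspaces, where positivity is guaranteed; the assumption $\dim F_x\ge p+2$ is used to ensure that $Q\cap\mathcal V$ always furnishes at least $d-p\ge 2$ full-weight vertical directions to anchor this estimate. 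The residual work is bookkeeping: verifying uniformly in $Q$ and $x$ that all non-leading terms are genuinely $O(1/t)$, which follows from compactness of $M$ together with the explicit $\sin\theta_i$ and $t^2$ gains furnished by the O'Neill expansion.
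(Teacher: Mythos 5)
Your proposal is correct and follows essentially the same route as the paper's proof: the canonical variation $g_t$, a principal-angle decomposition against the vertical--horizontal splitting, affinity in each squared cosine forcing the minimum to a vertex of the cube, monotonicity of positivity across dimensions, and compactness supplying uniform bounds. Your only real departure is cosmetic: you dualize formula~\eqref{new-formula} into $s_p(P)=2\sum_{p<a<b\le n}K(e_a,e_b)$ and run the vertex argument on the orthogonal $(n-p)$-plane $Q$ (with $\dim(Q\cap\mathcal V)\ge d-p$ anchoring full-weight vertical directions), which is exactly the paper's corner analysis --- where $k$ angles vanish and $(s_p)_t=\tfrac{1}{t^2}\hat s_k+\mathcal O(1/t)$, positivity following from $s_p>0\Rightarrow s_k>0$ --- viewed from the complement.
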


This result generalizes Theorem~2.2 of \cite{Labbi-Toulouse} and Lemma~3.2 of \cite{Labbi-AGAG}, where additional assumptions were imposed on the fibers.

\begin{proof}
We consider the canonical variation $g_t$ of $g$, obtained by rescaling the metric in the vertical directions by the factor $t^2$.  
We show that there exists $t>0$ such that the $p$-curvature of $g_t$ is positive.

All curvature invariants of $g_t$ are indexed by $t$, and invariants of the fibers endowed with the induced metric are denoted with a hat. We omit the index $t$ when $t=1$.

Let $P$ be a tangent $p$-plane in $M$. There exist $g_t$-orthonormal vectors $\{E_1,\dots,E_p\}$ spanning $P$, together with $g_t$-orthonormal vertical vectors $U_i$ and horizontal vectors $X_i$, such that
\[
E_i=\cos(\theta_i)U_i+\sin(\theta_i)X_i,\qquad
\theta_i\in[0,\pi/2].
\]
The angles $\theta_i$ are the principal (or Jordan) angles of $P$.

By O'Neill's formulas for Riemannian submersions (see Chapter~9 of \cite{Besse} or Chapter~2 of \cite{Labbi-these}), the sectional curvature of $g_t$ satisfies
\[
K_t(E_i,E_j)
=\frac{\cos^2(\theta_i)\cos^2(\theta_j)}{t^2}\,\hat K(U_i,U_j)
+\mathcal O\!\left(\frac{1}{t}\right).
\]
Consequently, the scalar and Ricci curvatures satisfy \cite{Labbi-Dedicata}
\[
\Scal_t=\frac{1}{t^2}\hat\Scal+\mathcal O\!\left(\frac{1}{t}\right),\qquad
\Ric_t(E_i,E_i)=\frac{\cos^2(\theta_i)}{t^2}\hat\Ric(tU_i,tU_i)+\mathcal O\!\left(\frac{1}{t}\right).
\]

Using Formula~\eqref{new-formula}, we obtain
\begin{equation*}
\begin{split}
(s_p)_t&(P)
=\Scal_t-2\sum_{i=1}^p\Ric_t(E_i,E_i)+2\sum_{1\le i<j\le p}K_t(E_i,E_j)\\
&=\frac{1}{t^2}\hat\Scal
-2\sum_{i=1}^p\frac{\cos^2(\theta_i)}{t^2}\hat\Ric(tU_i,tU_i)
+2\sum_{1\le i<j\le p}\frac{\cos^2(\theta_i)\cos^2(\theta_j)}{t^2}\hat K(U_i,U_j)
+\mathcal O\!\left(\frac{1}{t}\right).
\end{split}
\end{equation*}

Since $\theta_i\in[0,\pi/2]$ and the expression is linear in each $\cos^2(\theta_i)$, its minimum is attained at the corners of the cube where each $\theta_i$ equals either $0$ or $\pi/2$.  

If all $\theta_i=0$, then for sufficiently small $t$,
\[
(s_p)_t(E_1,\dots,E_p)
=\frac{1}{t^2}\hat s_p(tU_1,\dots,tU_p)+\mathcal O\!\left(\frac{1}{t}\right)>0.
\]
At a general corner with exactly $k$ angles equal to $0$ and the remaining $p-k$ equal to $\pi/2$, one obtains
\[
(s_p)_t=\frac{1}{t^2}\hat s_k+\mathcal O\!\left(\frac{1}{t}\right)>0,
\]
for sufficiently small $t$. Since $s_p>0$ implies $s_k>0$ for $0\le k<p\le n-2$, the proof is complete.
\end{proof}

\begin{remark}
The above theorem remains valid when positivity is replaced by negativity of the $p$-curvature.
\end{remark}

\begin{theorem}[Vanishing of the $\hat A$-genus under $s_2\ge 0$]\label{Ahat}
Let $(M^{n},g)$ be a closed spin manifold of dimension $n=4k$ with nonnegative $2$-curvature $s_2\ge 0$. Then
\[
\hat A(M)=0.
\]
\end{theorem}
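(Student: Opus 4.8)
The plan is to run the classical Lichnerowicz--Atiyah--Singer argument, using the hypothesis $s_2\ge 0$ first to force $\Scal\ge 0$ and then, in the borderline scalar-flat case, to upgrade the geometry all the way to flatness. By the Atiyah--Singer index theorem, on a closed spin manifold $M^{4k}$ one has $\hat A(M)=\operatorname{ind}D$, where $D$ is the Dirac operator, so it suffices to argue by contradiction: if $\hat A(M)\ne 0$ then $\ker D\ne\{0\}$, and I shall show that this forces $M$ to be flat, whence every Pontryagin number vanishes and $\hat A(M)=0$, a contradiction.

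First I would extract scalar positivity from $s_2\ge 0$. Using Formula~\eqref{new-formula} for $p=2$ together with $\Ric(e_a,e_a)=\sum_{b\ne a}K(e_a,e_b)$, a direct computation gives, for any orthonormal basis $\{e_1,\dots,e_n\}$, the clean identity $s_2(e_i,e_j)=2\sum_{a<b,\ a,b\notin\{i,j\}}K(e_a,e_b)$. Summing over all pairs $i<j$ and noting that each sectional curvature $K(e_a,e_b)$ is counted exactly $\binom{n-2}{2}$ times yields $\sum_{i<j}s_2(e_i,e_j)=\binom{n-2}{2}\,\Scal$. Since $\binom{n-2}{2}>0$ for $n\ge 4$ and each summand is nonnegative, this shows $\Scal\ge 0$ pointwise on $M$.

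Next I would feed $\Scal\ge 0$ into the Lichnerowicz formula $D^2=\nabla^*\nabla+\tfrac14\Scal$. A nonzero harmonic spinor $\psi\in\ker D$ satisfies $0=\|\nabla\psi\|^2_{L^2}+\tfrac14\int_M\Scal\,|\psi|^2$; with $\Scal\ge 0$ both terms must vanish, so $\psi$ is parallel and hence nowhere zero, forcing $\Scal\equiv 0$. The decisive point is then that the summation identity of the previous step gives $\sum_{i<j}s_2(e_i,e_j)=\binom{n-2}{2}\Scal\equiv 0$; as each term is nonnegative, every $s_2(e_i,e_j)$ vanishes, so $s_2\equiv 0$ on all tangent $2$-planes. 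By Proposition~\ref{dR-to-R}, the sectional curvature $s_2$ determines $\dR_2$ and hence the full Riemann tensor $\R$, so $s_2\equiv 0$ forces $\R\equiv 0$: the manifold is flat. A compact flat manifold has identically vanishing Pontryagin forms, so all its Pontryagin numbers vanish and in particular $\hat A(M)=0$, contradicting $\hat A(M)\ne 0$.

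The main obstacle is precisely the nonstrict inequality. Under $s_2>0$ the scalar curvature would be strictly positive and Lichnerowicz alone would finish the proof; under the nonnegative hypothesis one lands in the degenerate scalar-flat case where $D$ may carry a parallel spinor. The role of the $2$-curvature condition is that, unlike $\Scal\ge 0$ or even $\Ric\ge 0$, it rigidifies this borderline case completely---forcing $s_2\equiv 0$ and, through the reconstruction of $\R$ from $s_2$, full flatness---which is exactly what is needed to kill $\hat A$. I would verify only that the determination of $\R$ from $s_2$ in Proposition~\ref{dR-to-R} applies verbatim here (it does, since $\dR_2$ satisfies the first Bianchi identity and the reconstruction is linear, so $s_2\equiv 0$ yields $\dR_2=0$ and hence $\R=0$), and that the reduction of $\hat A(M)$ to Pontryagin numbers in dimension $4k$ is the standard one.
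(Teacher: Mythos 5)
Your proof is correct, and while it shares the paper's overall skeleton---extract $\Scal\ge 0$ from $s_2\ge 0$ by averaging, then rigidify the scalar-flat borderline case into flatness---you implement two of the three steps by genuinely different means. For the averaging step, the paper reads off $\Scal=\frac{1}{(n-2)(n-3)}\sum_{i,j}s_2(e_i,e_j)$ from the contraction identity~\eqref{trace}, whereas you rederive it combinatorially from~\eqref{new-formula} via the identity $s_2(e_i,e_j)=2\sum_{a<b,\ a,b\notin\{i,j\}}K(e_a,e_b)$ (which is correct, and which also makes transparent why $\Scal\equiv 0$ forces $s_2\equiv 0$ term by term); these are equivalent. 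The substantive divergences come next. Where $\Scal\not\equiv 0$, the paper invokes the Kazdan--Warner deformation to a metric of everywhere positive scalar curvature and then cites Lichn\'erowicz's vanishing theorem, while you avoid any deformation by running the refined Lichnerowicz argument directly: if $\hat A(M)\neq 0$ then Atiyah--Singer produces a nonzero harmonic spinor, which under $\Scal\ge 0$ must be parallel and nowhere zero, forcing $\Scal\equiv 0$---so your two cases collapse into one. Finally, in the flat case ($s_2\equiv 0\Rightarrow\dR_2=0\Rightarrow\R=0$, which you justify via Proposition~\ref{dR-to-R} just as the paper does via Corollary~\ref{flat}), the paper passes through Bieberbach's theorem, a finite covering by a flat torus, and multiplicativity of the $\hat A$-genus under coverings, whereas you conclude directly from Chern--Weil theory that a flat metric has vanishing Pontryagin forms, hence vanishing Pontryagin numbers, and $\hat A(M)=0$ since the $\hat A$-genus is a rational combination of these. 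Both substitutions are sound and arguably more economical (no deformation theorem, no covering-space argument), at the modest cost of invoking the index theorem and the Lichnerowicz formula explicitly rather than quoting the classical vanishing theorems as black boxes; your contradiction framing is slightly redundant (you in fact prove $\hat A(M)=0$ directly), but this is harmless.
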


\begin{proof}
Equation~\eqref{trace} yields $\cc^2(\dR_2)=(n-3)(n-2)\dR_0$, and hence
\[
\Scal=\frac{1}{(n-2)(n-3)}\sum_{i,j}s_2(e_i,e_j),
\]
where $\{e_i\}$ is an orthonormal basis of $T_pM$. Thus $s_2\ge 0$ implies $\Scal\ge 0$.

If $\Scal\not\equiv 0$, then $\Scal>0$ at some point. By the Kazdan--Warner theorem, the metric can be deformed to one of everywhere positive scalar curvature, and it follows from Lichn'erowicz's theorem that $\hat A(M)=0$.

If $\Scal\equiv 0$, then the above identity forces $s_2\equiv 0$. By Corollary~\ref{flat}, the manifold $(M,g)$ is flat. By Bieberbach's theorem, $M$ admits a finite covering by a flat torus $T^{4k}$. Since $\hat A(T^{4k})=0$ and the $\hat A$-genus is multiplicative under finite coverings, the result follows.
\end{proof}

The following example shows that the conclusion fails under the  assumption $\Ric\ge 0$.

\begin{example}[Nonnegative Ricci curvature does not force $\hat A=0$]\label{example1}
Let $X$ be a K3 surface. Then $X$ is spin and admits a Ricci--flat K\"ahler metric, hence $\Ric\ge 0$, while $\hat A(X)=2\neq 0$.
For any $m\ge 1$, consider the product
\[
M = \underbrace{X\times\cdots\times X}_{m\ \text{factors}},
\]
endowed with the product metric. Since each factor is Ricci--flat, one has $\Ric_M\equiv 0$.  
On the other hand, multiplicativity of the $\hat A$-genus gives
\[
\hat A(M)=\hat A(X)^m=2^m\neq 0.
\]
Thus there exist closed spin manifolds with $\Ric\ge 0$ but $\hat A\ne 0$. In particular, such a manifold cannot admit a Riemannian metric with nonnegative $2$-curvature.
\end{example}
\begin{remark}[Comparison with scalar curvature vanishing results]
It is classical that on a compact spin manifold, positivity of the scalar curvature implies the vanishing of the $\hat A$--genus. The vanishing result obtained here under the condition $s_2 \ge 0$ differs in nature, as the $2$--curvature is a sectional curvature associated with the four--index tensor $\dR_2$ and encodes strictly more information than scalar or Ricci curvatures. This shows in particular that higher--rank curvature conditions can yield topological consequences not captured by Ricci contraction alone.
\end{remark}

\section{A higher index Gravity of Lovelock}\label{lovelock}

David Lovelock proposed in 1971 \cite{Lovelock} a generalization of Einstein's field equations to dimensions $n>4$ by modifying only the gravitational sector. Matter in Lovelock's theory is coupled exactly as in general relativity. Lovelock's field equations read
\begin{equation}\label{Lov-Field}
\sum_{k}\alpha_kT_{2k}=c(n)T.
\end{equation}
Here, the right-hand side $T$ is the stress-energy $(0,2)$-tensor (as in GR), arising from the matter action. The sum ranges over integers $k\geq 0$. One has $T_0=\g$ (the metric tensor), $T_2$ is the Einstein tensor, and the higher tensors $T_{2k}$, for $4\leq 2k<n$, are the Lovelock tensors. For $2k\geq n$, one has $T_{2k}=0$. The coefficients $\alpha_k$ and $c(n)$ are constants.

\subsection{Lovelock tensors and the $(p,q)$-curvature tensors}

The curvature tensors introduced in the previous sections naturally arise in the context of higher-dimensional gravity.
In particular, Lovelock’s theory of gravity is obtained by contracting higher–order curvature tensors constructed from the Riemann curvature tensor. In this section, we introduce the Lovelock tensors and a broader family of curvature tensors within the framework of double forms, and we show how these objects naturally lead to a higher–index formulation of Lovelock’s gravitational field equations.\\

For $2\leq 2q\leq n$, let $R^q$ denote the $q$-fold exterior product of the Riemann curvature tensor $R$. This tensor is sometimes called the $2q$-Thorpe tensor, or the $2q$-Gauss--Kronecker curvature tensor \cite{Thorpe, Kulkarni}. It is a $(0,4q)$ covariant tensor whose components are given by
\[
\R^q_{i_1,\dots,i_{2q},j_1,\dots,j_{2q}}
=\frac{1}{4^k}\delta^{a_1\cdots a_{2q}}_{i_1\cdots i_{2q}}
\delta^{b_1\cdots b_{2q}}_{j_1\cdots j_{2q}}
\prod_{\ell=1}^q\R_{a_{2\ell -1}a_{2\ell}b_{2\ell -1}b_{2\ell}}.
\]

\subsubsection{Lovelock tensors}

The Lovelock tensors $T_{2q}$, for $0\leq 2q\leq n$, are obtained from the $4q$-index Gauss--Kronecker curvature tensor $\R^q$ in the same way as the Einstein tensor is obtained from the usual $4$-index Riemann curvature tensor $\R$. More precisely, they are given by \cite{Labbi-variational,Labbi-Thorpe}
\begin{equation}
T_{2q}=\frac{\cc^{2q}\R^q}{(2q)!}\g-\frac{\cc^{2q-1}\R^q}{(2q-1)!}.
\end{equation}
These are symmetric and divergence free $(1,1)$ double forms. Their components are given by
\[
(T_{2q})_{ij}=\frac{1}{((2q)!)^2}\sum_{\substack{A,\, B \\ |A|=|B|=2q}}\delta^{Ai}_{Bj}\R^q_{(A,B)}.
\]
Here, the sum is over all multi-indices $A,B$ of length $2q$.

For $q=0$, we set $T_0=\g$. For $q=1$, we recover the usual Einstein tensor.

For $q=2$, the tensor
\[
T_{4}=\frac{\cc^{4}R^2}{4!}\g-\frac{\cc^{3}\R^2}{3!}
\]
is obtained by contracting the $8$-index Gauss--Kronecker curvature tensor $R^2$. We shall show in the next proposition that $T_{4}$ can alternatively be obtained by contracting a $4$-index curvature tensor, namely the tensor $\R\circ \dR_2$, where $\dR_2$ is the generalized double dual of Riemann and $\circ$ denotes the composition product of double forms. Precisely, we have the following.

\begin{proposition}\label{lov-index4}
Let $\dR_2$ be the $(2,2)$ double form representing the generalized double dual of Riemann. Then the Lovelock tensor $T_4$ can be obtained by contracting the $(2,2)$ double form $\R\circ \dR_2$ as follows:
\begin{equation}
T_{4}=\frac{1}{2}\cc^2(\R\circ \dR_2)\, \g-2\cc(\R\circ \dR_2).
\end{equation}
\end{proposition}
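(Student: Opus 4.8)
The plan is to match the proposed right-hand side against the classical expression $T_4 = \frac{\cc^4\R^2}{4!}\,\g - \frac{\cc^3\R^2}{3!}$ recalled above. I claim the whole statement reduces to the single $(1,1)$ double form identity
\[
\cc(\R\circ\dR_2) = \tfrac{1}{12}\,\cc^3\R^2 .
\]
Granting this, one further contraction gives $\cc^2(\R\circ\dR_2) = \tfrac{1}{12}\,\cc^4\R^2$, and substituting both relations into $\tfrac12\cc^2(\R\circ\dR_2)\,\g - 2\cc(\R\circ\dR_2)$ reproduces the classical formula verbatim, since $\tfrac12\cdot\tfrac1{12} = \tfrac1{4!}$ and $2\cdot\tfrac1{12} = \tfrac1{3!}$. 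Thus the proof amounts to establishing the displayed identity.

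To prove it, I would pass to the endomorphism picture in which a symmetric $(2,2)$ double form acts on $\Lambda^2$, and expand $\dR_2$ through Equation~\eqref{dR2}, $\dR_2 = \R - \g\,\Ric + \tfrac14\g^2\Scal$. Here $\g^2 = 2\,\mathrm{Id}_{\Lambda^2}$ and $\g\,\Ric$ acts as the canonical derivation $\hat\Ric$ of the Ricci endomorphism, $x\wedge y \mapsto \Ric\,x\wedge y + x\wedge\Ric\,y$, so that on $\Lambda^2$ one has $\dR_2 = \R - \hat\Ric + \tfrac12\Scal\,\mathrm{Id}$. Consequently
\[
\R\circ\dR_2 = \R\circ\R - \R\circ\hat\Ric + \tfrac12\Scal\,\R .
\]
Applying the contraction $\cc$ to each of these three $(2,2)$ double forms yields a $(1,1)$ double form that is universally quadratic in $\R$, $\Ric$, and $\Scal$.

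In parallel I would expand $\cc^3\R^2$ directly, using the contraction rules for the exterior product of double forms (the general formula for $\cc^{k}(\R\cdot\R)$) to write it as a universal quadratic expression in the same invariants. Comparing the two $(1,1)$ tensors coefficient by coefficient then forces the constant $\tfrac1{12}$ and closes the argument. As an independent normalization check on the trace, I would use that the full contraction of a composition product of $(2,2)$ double forms is, up to the standard normalization, twice their inner product, giving $\cc^2(\R\circ\dR_2) = 2\langle\R,\dR_2\rangle = 2h_4 = \tfrac1{12}\cc^4\R^2$, in agreement with the trace of the main identity.

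The main obstacle I anticipate is the composition--contraction bookkeeping. Unlike the exterior product, the composition product $\circ$ is noncommutative, so $\R\circ\dR_2$ need not be symmetric as a $(2,2)$ form; one must verify that the antisymmetric contribution, which is governed by the commutator $[\R,\hat\Ric]$, is annihilated by $\cc$, so that $\cc(\R\circ\dR_2)$ is a genuine symmetric $(1,1)$ tensor. Beyond this, computing $\cc(\R\circ\R)$ and $\cc(\R\circ\hat\Ric)$ and reconciling them with $\cc^3\R^2$ while tracking every factorial constant is the delicate part. A cleaner route I would also pursue is to express $\cc(\R\circ\dR_2)$ directly through the interior product $i_\R$ acting on $\R^2$, using the double-form identities $i_\R = \ast\,\R\,\ast$ and $\cc^r = \ast\,\g^r\,\ast$, which would bypass the component computation entirely.
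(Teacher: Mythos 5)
Your proposal is correct, and its skeleton coincides with the paper's proof: both reduce the proposition to the single identity $\cc^{3}\R^{2}=12\,\cc(\R\circ \dR_2)$ (whence $\cc^{4}\R^{2}=12\,\cc^{2}(\R\circ \dR_2)$ by one further contraction), and both obtain it by substituting $\dR_2=\R-\g\,\Ric+\tfrac14\g^{2}\Scal$ from \eqref{dR2}, so that $\R\circ \dR_2=\R\circ\R-\R\circ\g\Ric+\tfrac12\Scal\,\R$. Where you differ is in how the key contraction is evaluated. The paper isolates a clean lemma, $\cc(\R\circ\g h)=\iota_h\R+\Ric\circ h$ for any symmetric $(1,1)$ form $h$, proved by pairing against an arbitrary $(1,1)$ form $k$ and invoking the Greub--Vanstone identity $\g k\circ\g h=\g(k\circ h)+kh$, and then simply \emph{quotes} the generalized Lanczos identity (formula (11) of \cite{Labbi-Thorpe}) to identify $\cc(\R\circ\R)-\iota_{\Ric}\R-\Ric\circ\Ric+\tfrac12\Scal\,\Ric$ with $\tfrac12\bigl(\tfrac{\cc^{3}\R^{2}}{3!}\bigr)$. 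You instead plan to expand $\cc^{3}\R^{2}$ from scratch and match coefficients; this amounts to re-deriving that Lanczos identity, so it would succeed, but it is the heavy part of the argument and is left unexecuted in your write-up, whereas the paper's citation closes it in one line. Two of your side remarks check out against the paper: your trace normalization $\cc^{2}(\R\circ \dR_2)=2\langle \R,\dR_2\rangle=2h_4$ agrees with the paper's proposition $\langle \R,\dR_2\rangle=h_4$; and your symmetry worry about the commutator $[\R,\g\Ric]$ resolves positively, since transposing the paper's first lemma gives $\cc(\g h\circ\R)=\iota_h\R+h\circ\Ric$, hence $\cc\bigl([\R,\g\Ric]\bigr)=[\Ric,\Ric]=0$, so $\cc(\R\circ \dR_2)$ is indeed symmetric --- a point the paper handles implicitly because the term $\Ric\circ\Ric$ produced by its lemma is manifestly symmetric.
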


The above proposition follows directly from the following two lemmas.

\begin{lemma}
Let $h$ be any symmetric $(1,1)$ double form. Then one has
\begin{equation}
\cc(\R\circ \g h)=\iota_h\R+\Ric \circ h.
\end{equation}
Here $\iota_h$ denotes the interior product of double forms (see \cite{Labbi-double-forms, Labbi-Lapl-Lich}).
\end{lemma}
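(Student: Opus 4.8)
The plan is to prove this by a direct computation in a local orthonormal frame $\{e_1,\dots,e_n\}$, translating the three operations involved---the exterior product $\g h$, the composition product $\R\circ(\,\cdot\,)$, and the contraction $\cc$---into index form and then reassembling the result invariantly. Nothing in the identity uses the differential (second) Bianchi identity, so it is in fact a purely \emph{algebraic} statement: it holds verbatim with $\R$ replaced by any symmetric $(2,2)$ double form and $\Ric$ by its contraction $\cc\R$. This observation lets me work with the algebraic symmetries of $\R$ alone.

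First I would expand the exterior product of the two $(1,1)$ forms $\g$ and $h$, obtaining $(\g h)_{ab,kl}=\delta_{ak}h_{bl}-\delta_{al}h_{bk}-\delta_{bk}h_{al}+\delta_{bl}h_{ak}$. Substituting this into the composition product $(\R\circ\g h)_{ij,kl}=\tfrac12\sum_{a,b}R_{ij,ab}(\g h)_{ab,kl}$ produces four terms; the key simplification is that the antisymmetry $R_{ij,ab}=-R_{ij,ba}$ together with the symmetry of $h$ pairs these terms two by two, collapsing the expression to
\[
(\R\circ\g h)_{ij,kl}=\sum_{b}\bigl(R_{ij,kb}h_{bl}-R_{ij,lb}h_{bk}\bigr).
\]

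Next I would apply the contraction $\cc$, setting $k=i$ and summing over $i$, which splits the result into two sums. In the first, the inner sum $\sum_i R_{ij,ib}=\Ric_{jb}$ is exactly a Ricci contraction, so that term assembles into $(\Ric\circ h)_{j,l}=\sum_b\Ric_{jb}h_{bl}$. In the second, after using the antisymmetry of $\R$ and the symmetry $h_{bi}=h_{ib}$, the term becomes $\sum_{i,b}h_{ib}R_{ij,bl}$, which is precisely the interior product $\iota_h\R$, namely the contraction of $\R$ against $h$ on one index from each side, i.e.\ the natural generalization of $\cc=\iota_{\g}$. This yields $\cc(\R\circ\g h)=\iota_h\R+\Ric\circ h$, as claimed.

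The main obstacle is bookkeeping rather than conceptual: one must fix a single consistent set of normalization conventions for the exterior product, the composition product (the factor $\tfrac1{2!}$), the contraction, and the inner product that characterizes $\iota_h$ as the adjoint of multiplication by $h$; and then track the signs coming from the various antisymmetrizations so that the two surviving sums are correctly identified. A useful consistency check is the special case $h=\g$: there $\g^2=2\,\mathrm{Id}$ on $\Lambda^2$ gives $\R\circ\g^2=2\R$ and hence $\cc(\R\circ\g^2)=2\Ric$, which matches the right-hand side $\iota_{\g}\R+\Ric\circ\g=\cc\R+\Ric=2\Ric$.
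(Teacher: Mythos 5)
Your computation is correct, and your consistency check at $h=\g$ is the right sanity test, but your route differs genuinely from the paper's. The paper never touches components: it tests the identity against an arbitrary symmetric $(1,1)$ double form $k$ under the inner product of double forms, writing $\langle \cc(\R\circ \g h),k\rangle=\langle \R\circ\g h,\g k\rangle=\langle \R,\g k\circ\g h\rangle$, then invokes the Greub--Vanstone identity $\g k\circ \g h=\g(k\circ h)+kh$ to split this into $\langle \cc\R,k\circ h\rangle+\langle \R,kh\rangle=\langle \Ric\circ h,k\rangle+\langle \iota_h\R,k\rangle$, i.e.\ everything is pushed through adjointness: $\g\cdot$ adjoint to $\cc$, $h\cdot$ adjoint to $\iota_h$, and composition moved across the pairing. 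Your direct index calculation buys self-containedness --- you need no Greub--Vanstone-type lemma, only the antisymmetries of $\R$ --- at the cost of being hostage to normalization conventions (the $\tfrac12$ in $(\R\circ\g h)_{ij,kl}=\tfrac12\sum_{a,b}R_{ij,ab}(\g h)_{ab,kl}$ and the precise index form $(\iota_h\R)_{jl}=\sum_{r,s}h_{rs}R_{rj,sl}$), which you do handle consistently; the paper's argument buys convention-independence and makes transparent why the statement is the ``dual'' of the product rule for $\g k\circ\g h$. One small inaccuracy worth noting: the collapse of the four terms of $(\g h)_{ab,kl}$ to two uses only the antisymmetry $R_{ij,ab}=-R_{ij,ba}$ (terms $1$ and $3$ coincide, as do $2$ and $4$, after relabeling); the symmetry of $h$ is not needed there and enters only later, exactly where you invoke $h_{bi}=h_{ib}$ to recognize $\sum_{i,b}h_{ib}R_{ij,bl}$ as $\iota_h\R$. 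This does not affect the validity of your proof.
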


\begin{remark}
We remark that $\iota_h\R=\overset{\circ}{R}$ is the Riemann tensor of the second kind.
\end{remark}

\begin{proof}
Let $k$ be an arbitrary $(1,1)$ double form. Using algebraic properties of the composition product and the inner product of double forms, we compute
\begin{align*}
\langle \cc(\R\circ \g h),k\rangle
=&\ \langle \R\circ \g h,\g k\rangle
= \langle \R,\g k\circ \g h\rangle\\
=&\ \langle \R,\g(k\circ h)+kh\rangle
=\langle \cc \R,k\circ h\rangle+\langle \R,kh\rangle\\
=&\ \langle \Ric \circ h,k\rangle+\langle \iota_h\R,k\rangle.
\end{align*}
In the third equality, we used the Greub--Vanstone identity $\g k\circ \g h=\g(k\circ h)+kh$.
\end{proof}

The next lemma provides an alternative short form of a general Lanczos identity \cite[formula (11)]{Labbi-Thorpe}.

\begin{lemma}
Let $\dR_2$ be the $(2,2)$ double form representing the generalized double dual of Riemann. Then one has
\begin{equation}
\cc^3\R^2=12 \cc(\R\circ \dR_2).
\end{equation}
In particular, $\cc^4\R^2=12 \cc^2(\R\circ \dR_2)$.
\end{lemma}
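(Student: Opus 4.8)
The plan is to reduce the identity to two inputs: the explicit form of $\dR_2$ from equation~\eqref{dR2} together with the preceding lemma, and a single contraction formula for the exterior square $\R^2$. First, observe that the ``in particular'' assertion $\cc^4\R^2 = 12\,\cc^2(\R\circ \dR_2)$ is obtained from the main identity $\cc^3\R^2 = 12\,\cc(\R\circ \dR_2)$ simply by applying the contraction $\cc$ once more, since $\cc$ is linear. It therefore suffices to prove the main identity, an equality of symmetric $(1,1)$ double forms.

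Next I would rewrite the right-hand side. Using \eqref{dR2} in the form $\dR_2 = \R - \g H$ with $H := \Ric - \tfrac{\Scal}{4}\g$, bilinearity of the composition product gives $\R\circ \dR_2 = \R\circ \R - \R\circ \g H$. Applying the previous lemma to the term $\R\circ \g H$ (with the symmetric $(1,1)$ form $H$), together with the elementary identities $\iota_\g \R = \cc\R = \Ric$ and $\Ric\circ \g = \Ric$, yields
\[
\cc(\R\circ \dR_2) = \cc(\R\circ \R) - \iota_{\Ric}\R - \Ric\circ \Ric + \tfrac{\Scal}{2}\,\Ric .
\]
Thus the lemma is equivalent to the purely curvature-quadratic identity
\[
\cc^3\R^2 = 12\,\cc(\R\circ \R) - 12\,\iota_{\Ric}\R - 12\,\Ric\circ \Ric + 6\,\Scal\,\Ric .
\]

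The heart of the proof is to verify this last identity, and the natural tool is the general Lanczos identity \cite[formula~(11)]{Labbi-Thorpe} specialized to $q=2$, which expresses the triple contraction $\cc^3\R^2$ of the Gauss--Kronecker tensor directly in terms of the standard quadratic curvature tensors. I would expand both $\cc^3\R^2$ and $\cc(\R\circ \R)$ in the common basis of quadratic symmetric $(1,1)$ forms, namely $\iota_{\Ric}\R$, $\Ric\circ \Ric$, the tensor $R_{iabc}R_j{}^{abc}$, $\Scal\,\Ric$, and the trace terms $\|\R\|^2\g$, $\|\Ric\|^2\g$, and $\Scal^2\g$, using the generalized Kronecker-delta bookkeeping of the double-form calculus (the interplay between the exterior product, the composition product, and contraction). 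Comparing coefficients then gives the claimed equality.

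The main obstacle is precisely this combinatorial bookkeeping: correctly computing the triple contraction of the exterior square $\R^2$ and the single contraction of the composition square $\R\circ \R$, with all factorial and multiplicity factors in place. Everything else --- the reduction of the ``in particular'' clause, the substitution of \eqref{dR2}, and the application of the previous lemma --- is routine linear algebra in the space of double forms. Once the two contraction formulas are established, matching them confirms the factor $12$ and completes the proof; this is the sense in which the lemma is a short repackaging of the general Lanczos identity.
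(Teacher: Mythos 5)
Your proposal is correct and follows essentially the same route as the paper: substitute the decomposition \eqref{dR2} into $\R\circ\dR_2$, apply the preceding lemma (your variant with $h=\Ric-\tfrac{\Scal}{4}\g$ together with $\iota_\g\R=\cc\R$ and $\Ric\circ\g=\Ric$ is just a cosmetic repackaging of the paper's use of $h=\Ric$ plus $\R\circ\g^2=2\R$), and close via the generalized Lanczos identity \cite[formula~(11)]{Labbi-Thorpe}. The paper simply cites that identity rather than re-deriving it by coefficient matching, but your intermediate expression $\cc(\R\circ\dR_2)=\cc(\R\circ\R)-\iota_{\Ric}\R-\Ric\circ\Ric+\tfrac{\Scal}{2}\Ric$ and the final factor $12$ agree exactly with the paper's proof.
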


\begin{proof}
Recall that formula (\ref{dR2}) gives $\dR_2= \R-\g\, \Ric+\frac{1}{4} \g^2\, \Scal$. It follows that
\begin{align*}
\cc(\R\circ \dR_2)
=&\ \cc\left( \R\circ \R-\R\circ \g\Ric+1/2\R.\Scal\right)\\
=&\ \cc( \R\circ \R)-\cc(\R\circ \g\Ric)+1/2\cc(\R.\Scal)\\
=&\ \cc( \R\circ \R)-\iota_{\Ric}\R-\Ric \circ \Ric+1/2\Ric .\Scal\\
=&\ \frac12\left(\frac{\cc^3\R^2}{3!}\right).
\end{align*}
In the last step, we used a generalization of the Lanczos identity \cite[formula (11)]{Labbi-Thorpe}. In the preceding step, we used the previous lemma, which shows that $\cc(\R\circ \g \Ric)=\iota_{\Ric}\R+\Ric \circ \Ric$.
\end{proof}

\subsubsection{The $(p,q)$-curvature tensors}

\begin{definition}[\cite{Labbi-double-forms}]
For integers $1\leq q\leq {n\over 2}$ and $0\leq p\leq n-2q$, the $(p,q)$-curvature tensor, denoted here by $R^{(p,q)}$, is a $(0,2p)$ covariant tensor whose components are given by
\[
\R^{(p,q)}_{i_1,\dots,i_{p},j_1,\dots,j_{p}}
=\frac{1}{((2q)!)^2}\sum_{\substack{A,\, B \\ |A|=|B|=2q}}
\delta^{Ai_1,\dots,i_{p}}_{Bj_1,\dots,j_{p}}\R^q_{(A,B)}.
\]
Here, the sum is over all multi-indices $A,B$ with length $2q$.
\end{definition}

Using a manipulation completely similar to the one in the proof of Proposition~\ref{index-free}, one obtains the following index-free formulation:
\begin{equation}\label{Rpq}
\R^{(p,q)}={1\over (n-2q-p)!}\ast\bigl( g^{n-2q-p}\R^q\bigr),
\end{equation}
or equivalently, using the identity $\ast\, \g^r\, \ast=\cc^r$,
\begin{equation}\label{Cpq}
\R^{(p,q)}={1\over (n-2q-p)!}\,  \cc^{n-2q-p}(\ast\, \R^q\bigr),
\end{equation}
It turns out that the tensors $\R^{(p,q)}$ can also be obtained by contracting directly the tensors $R^q$. Indeed, formula (15) of \cite{Labbi-double-forms} shows that
\begin{equation}\label{dRcR}
\R^{(p,q)}=\sum_{r=\max\{0,2q-p\}}^{2q}\frac{(-1)^r}{r!}\frac{\g^{p-2q+r}}{(p-2q+r)!}\cc^r(R^q).
\end{equation}

For $p=0$ and $2\leq 2q\leq n$, the tensors $\R^{(0,q)}$ are scalar functions and are, up to a constant factor, the Gauss--Bonnet curvatures of $(M,g)$. In particular, when $n$ is even, $\R^{(0,n/2)}$ is the Gauss--Bonnet integrand.

For $q=1$ and $0\leq p\leq n-2$, one has $\R^{(p,1)}=\dR_p$, which coincides with the generalized double duals of Riemann introduced above.

For $p=1$ and $2q\leq n$, $\R^{(1,q)}$ coincides with the $2q$-Lovelock tensor $T_{2q}$.

\medskip

The curvature tensors $\R^{(p,q)}$ are symmetric $(p,p)$ double forms satisfying the first Bianchi identity. In general, they do not satisfy the second Bianchi identity, but they are always divergence free. Moreover, they satisfy the following hereditary property with respect to contractions (for $p\geq 1$):
\begin{equation}
\cc \R^{(p,q)}=(n-2q-p)\, \R^{(p-1,q)}.
\end{equation}
In particular, $(p-1)$ contractions of $\R^{(p,q)}$ generate the Lovelock tensor $T_{2q}$ of order $2q$, and $p$ contractions generate the $(2q)$-Gauss--Bonnet curvature $h_{2q}$:
\[
\R^{(p,q)} \xrightarrow{\cc} \R^{(p-1,q)} \xrightarrow{\cc } \cdots
\xrightarrow{\cc }\R^{(1,q)}={\mathbf{Lovelock\,\, tensor}} \xrightarrow{\cc } {\mathrm{ GB \, curvature}.}
\]
Because of this hereditary property, we shall call $\R^{(p,q)}$ the \emph{$p$-parent} of the $(2q)$-Lovelock tensor $T_{2q}$.
\begin{remark}
The tensors $\R^{(p,q)}$ are used in \cite{Labbi-Thorpe} to define the so called $2q$-Thorpe and $2q$-anti-Thorpe metrics. These are generalizations of the Einstein condition and conformal flateness. Precisely, if the dimension of the manifold $n=2p$ is even, we have the following analogous of Proposition \ref{self}: 
\begin{itemize}
\item The self duality condition $\ast(\R^{(p,q)})=\R^{(p,q)}$ is equivalent to the manifold being $(2q)$-Thorpe.
\item The anti-self duality condition $\ast(\R^{(p,q)})=-\R^{(p,q)}$ is equivalent to the manifold being $(2q)$-anti-Thorpe.
\end{itemize}
\end{remark}
\subsection{Parent Lovelock's field equations}\label{subs}

Recall the orthogonal decomposition of the $(2q,2q)$ double form $R^q$ \cite{Labbi-double-forms}:
\[
R^q=\omega_{2q}+\g \omega_{2q-1}+ \cdots +\g^{2q-2}\omega_2+\g^{2q-1}\omega_1+\g^{2q}\omega_0,
\]
where each $\omega_i$ is a trace free $(i,i)$ double form.

Corollary 4.4 of \cite{Labbi-double-forms} shows that for $n\geq 2q$ and $p\leq n-2q$ one has
\begin{equation}\label{decomp}
(n-2q-p)!\, \R^{(p,q)}
=\ast\bigl( g^{n-2q-p}\R^q\bigr)
=\sum_{i=0}^{\min\{2q,p\}}(-1)^i\frac{(n-p-i)!}{(p-i)!}\g^{p-i}\omega_i.
\end{equation}
In particular, for $p=1$ we obtain
\begin{equation}
(n-2q-1)!\, T_{2q}
=\sum_{i=0}^{1}(-1)^i\frac{(n-1-i)!}{(1-i)!}\g^{1-i}\omega_i.
\end{equation}

The above analysis shows that Lovelock’s field equations \eqref{Lov-Field} retain only a limited part of the curvature information encoded in the tensor $R^q$. More precisely, except for the components $\omega_0$ and $\omega_1$ appearing in the orthogonal decomposition of $R^q$, all higher trace-free components are systematically eliminated by the contraction process defining the Lovelock tensor $T_{2q}$. From this perspective, Lovelock gravity can be viewed as a theory in which most curvature components of $R^q$ do not contribute effectively to the gravitational 

We see that only the components $\omega_0$ and $\omega_1$ of $\R^q$ contribute effectively to $T_{2q}$; the remaining components are lost during the contraction process. In particular, these components do not participate effectively in Lovelock's field equation \eqref{Lov-Field}.

To ensure that all components of $\R^q$ contribute effectively to the field equations, it is natural to replace the Lovelock tensor by one of its $p$-parent tensors. The following proposition shows that this requirement is satisfied precisely for those integers $p$ such that
$p\geq \min\{n-2q,2q\}.$.

\begin{proposition}\label{effective}
All the components $\omega_i$ of $\R^q$ contribute effectively in $\R^{(p,q)}$ if and only if $p\geq \min\{2q,n-2q\}$.
\end{proposition}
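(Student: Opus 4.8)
The plan is to read off, from the orthogonal decomposition \eqref{decomp}, exactly which trace-free components $\omega_i$ of $\R^q$ survive in $\R^{(p,q)}$, and to compare this list with the components that are genuinely present in $\R^q$ itself. Both questions are controlled by a single tool, the cancellation property of exterior multiplication by powers of $\g$ \cite[Proposition~2.3]{Labbi-double-forms}: if $\omega$ is a trace-free (effective) $(i,i)$ double form, then $\g^{k}\omega=0$ if and only if $k>n-2i$. First I would use this to determine which components actually occur in $\R^q$. In $\R^q=\sum_{i=0}^{2q}\g^{2q-i}\omega_i$ the $i$-th summand $\g^{2q-i}\omega_i$ vanishes identically — the $i$-th slot being vacuous for every $(2q,2q)$ double form — exactly when $2q-i>n-2i$, that is, when $i>n-2q$. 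Hence the only components of $\R^q$ that can be nonzero are $\omega_0,\dots,\omega_m$ with $m:=\min\{2q,\,n-2q\}$, and for a generic metric all of these are in fact nonzero. This is the crucial, slightly counterintuitive point: although the formal decomposition runs up to $i=2q$, the high components $\omega_i$ with $n-2q<i\le 2q$ are automatically annihilated by the padding factor $\g^{2q-i}$ and therefore carry no information.

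Next I would read off the components retained by $\R^{(p,q)}$. Formula \eqref{decomp} expresses $(n-2q-p)!\,\R^{(p,q)}$ as $\sum_{i=0}^{\min\{2q,p\}}(-1)^i\frac{(n-p-i)!}{(p-i)!}\g^{p-i}\omega_i$; since $p\le n-2q$ and $i\le 2q$ one has $n-p-i\ge 0$ and $p-i\ge 0$, so the scalar coefficients are strictly positive, and moreover $p\le n-2q\le n-i$ gives $p-i\le n-2i$, whence $\g^{p-i}\omega_i\neq 0$ for every $i$ in the summation range. Thus $\R^{(p,q)}$ contains a genuinely nonzero contribution of $\omega_i$ precisely for $0\le i\le\min\{2q,p\}$, while the components with $i>p$ are discarded: dualizing, $\g^{n-p-i}\omega_i=0$ because $n-p-i>n-2i$, which is exactly why the summation in \eqref{decomp} stops at $\min\{2q,p\}$.

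It then remains to combine the two lists. All present components $\omega_0,\dots,\omega_m$ of $\R^q$ contribute effectively to $\R^{(p,q)}$ if and only if $m\le\min\{2q,p\}$; since $m\le 2q$ this is equivalent to $m\le p$, that is, to $p\ge\min\{2q,n-2q\}$. For the converse, if $p<m$ then the top surviving component $\omega_m$ of $\R^q$ lies beyond the summation range of \eqref{decomp} and is lost, so the condition is sharp. The only real obstacle in this argument is the first step: one must resist identifying the present components of $\R^q$ with the full formal list $\omega_0,\dots,\omega_{2q}$, and instead recognize, via the cancellation property, that they stop at $i=\min\{2q,n-2q\}$. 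Once this truncation is established, the statement reduces to comparing two truncation indices and reading the coefficients off \eqref{decomp}.
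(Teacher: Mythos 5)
Your proof is correct and takes essentially the same route as the paper: both arguments rest on the decomposition \eqref{decomp} combined with the fact that the decomposition of $\R^q$ truncates at $i=\min\{2q,\,n-2q\}$, and then compare this truncation index with the summation bound $\min\{2q,p\}$. The only difference is presentational: the paper splits into the cases $n<4q$ and $n\geq 4q$ and cites \cite[Proposition~2.1]{Labbi-Gen-Einstein} for the truncation, whereas you derive it uniformly from the cancellation property of multiplication by powers of $\g$, which also lets you verify explicitly that the coefficients and the terms $\g^{p-i}\omega_i$ within range are nonzero.
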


\begin{proof}
One has to be careful in dimensions where $2q>n/2$, since in that case some components of $\R^q$ vanish for dimensional reasons and therefore do not contribute even in $R^q$ itself (as in the vanishing of the Weyl component of $\R$ in dimension $3$).

Assume first that $n<4q$. Then $n-2q<2q$ and one has $\R^q=\sum_{i=0}^{n-2q}\g^{2q-i}\omega_i$; see \cite[Proposition 2.1]{Labbi-Gen-Einstein}. The remaining components $\omega_i$ for $i>n-2q$ vanish for dimensional reasons. In this case, \eqref{decomp} is equivalent to
\begin{equation}\label{decomp2}
(n-2q-p)!\, \R^{(p,q)}
=\ast\bigl( g^{n-2q-p}\R^q\bigr)
=\sum_{i=0}^{\min\{n-2q,p\}}(-1)^i\frac{(n-p-i)!}{(p-i)!}\g^{p-i}\omega_i.
\end{equation}
It follows that the statement holds in this case if and only if $p\geq n-2q$.

Next, assume that $n\geq 4q$. Then the statement follows directly from \eqref{decomp} provided $p\geq 2q$.
\end{proof}

Let us emphasize the two principal cases.

\begin{proposition}
\begin{itemize}
\item For $n\geq 4q$, one has
\[
\R^q=\sum_{i=0}^{2q}\g^{2q-i}\omega_i,\,\,\, {\mathrm{and}}\,\,\, 
\R^{(2q,q)}=\sum_{i=0}^{2q}(-1)^i\binom{n-2q-i}{2q-i}\g^{2q-i}\omega_i.
\]
In particular, all the components $\omega_i$ in $\R^q$ survive in $\R^{(2q,q)}$. Furthermore, the two tensors share the same trace free part $\omega_{2q}$.

\item For $n<4q$, one has
\[
\R^q=\sum_{i=0}^{n-2q}\g^{2q-i}\omega_i,\,\,\, {\mathrm{and}}\,\,\, 
\R^{(n-2q,q)}=\sum_{i=0}^{n-2q}(-1)^i\frac{(2q-i)!}{(n-2q-i)!}\g^{n-2q-i}\omega_i.
\]
In particular, all the nonvanishing components $\omega_i$ in $\R^q$ survive in $\R^{(n-2q,q)}$.
\end{itemize}
\end{proposition}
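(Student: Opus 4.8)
The plan is to obtain both formulas by specializing the general decomposition already derived in the proof of Proposition~\ref{effective}. Recall that equation~\eqref{decomp} expresses, for $n\geq 2q$ and $p\leq n-2q$, the quantity $(n-2q-p)!\,\R^{(p,q)}$ as $\sum_{i=0}^{\min\{2q,p\}}(-1)^i\frac{(n-p-i)!}{(p-i)!}\g^{p-i}\omega_i$, while its truncated version~\eqref{decomp2} handles the regime $n<4q$ where the components $\omega_i$ with $i>n-2q$ vanish for dimensional reasons. The two assertions of the proposition are precisely the instances $p=2q$ (first case) and $p=n-2q$ (second case).

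For the first case I would set $p=2q$ in~\eqref{decomp}. Then $\min\{2q,p\}=2q$, so the sum runs over all $0\le i\le 2q$, and the prefactor collapses to $(n-4q)!$. Dividing through, the coefficient of $\g^{2q-i}\omega_i$ becomes $(-1)^i\frac{(n-2q-i)!}{(2q-i)!\,(n-4q)!}$, which I would identify as $(-1)^i\binom{n-2q-i}{2q-i}$ via $\binom{m}{k}=\frac{m!}{k!\,(m-k)!}$ applied with $m=n-2q-i$ and $k=2q-i$, noting $m-k=n-4q$. This gives the stated formula. For the ``in particular'' claims I would check that each binomial is strictly positive: for $0\le i\le 2q$ the hypothesis $n\ge 4q$ yields $0\le 2q-i\le n-2q-i$, so no component is killed; and evaluating the top term $i=2q$ gives coefficient $(-1)^{2q}\binom{n-4q}{0}=1$, so the $\g^0$--part of $\R^{(2q,q)}$ equals $\omega_{2q}$, the common trace--free part.

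For the second case I would set $p=n-2q$ in~\eqref{decomp2}. Now $\min\{n-2q,p\}=n-2q$, the prefactor is $(n-2q-p)!=0!=1$, and the coefficient of $\g^{n-2q-i}\omega_i$ simplifies to $(-1)^i\frac{(n-p-i)!}{(p-i)!}=(-1)^i\frac{(2q-i)!}{(n-2q-i)!}$, yielding the second formula at once. Since $i\le n-2q<2q$ throughout, both factorials are well defined and the coefficients never vanish, so every surviving component $\omega_i$ of $\R^q$ indeed contributes.

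The argument is entirely routine: it is a direct substitution into formulas established earlier, with essentially no obstacle. The only points demanding any care are the combinatorial rewriting of the factorial coefficient as a binomial in the first case, and the observation that choosing $p=2q$ or $p=n-2q$ is exactly what makes the truncation $\min$ in the general formulas collapse to the full index range, so that all (nonvanishing) components of $\R^q$ are retained.
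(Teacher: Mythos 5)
Your proposal is correct and follows exactly the route the paper intends: the proposition is stated there as an immediate specialization of the decomposition~\eqref{decomp} (and its truncated form~\eqref{decomp2} when $n<4q$) at $p=2q$ and $p=n-2q$ respectively, which is precisely what you carry out, including the correct identification $\frac{(n-2q-i)!}{(2q-i)!\,(n-4q)!}=\binom{n-2q-i}{2q-i}$ and the positivity of all coefficients. Your extra observations (the $i=2q$ term having coefficient $1$, giving the shared trace-free part $\omega_{2q}$, and $i\le n-2q<2q$ keeping both factorials well defined in the second case) are exactly the checks needed and are consistent with the paper.
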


In conclusion, for all components of $R^q$ to contribute effectively in the field equations, one may replace $T_{2q}$ in Lovelock's equation by its parent tensor $\R_{(2q,q)}$ in the case $2q\leq n/2$, and by the parent tensor $\R_{(n-2q,q)}$ in the case $2q>n/2$.

One possible way to write a full gravitational field equation at a higher index level, in which all components $\omega_i$ of $R^q$ contribute effectively, is
\[
\sum_{0\leq 2q\leq n/2}\alpha_{2q}\R_{(2q,q)}
+\sum_{n\geq 2q>n/2}\alpha_{2q}\R_{(n-2q,q)}
=\chi \mathbb{T}.
\]
However, the left-hand side of the above equation is not homogeneous, since it sums tensors of different valences. An alternative is to take a common higher $d$-parent so that all terms become fixed $(d,d)$ double forms:
\[
\sum_{q}\alpha_{q}\R_{(d,q)}=\chi \mathbb{T}.
\]
Here $d=d(n)$ is a constant depending only on the dimension $n$. It is chosen according to Proposition~\ref{effective} as the maximum of the set
\[
\{\min\{2q,n-2q\}:2\leq 2q<n\}.
\]
It is not difficult to check that $d(n)$ is given by
\[
d(n)=
\begin{cases}
\frac{n-1}{2} & n \,\, {\mathrm{odd}},\\[2mm]
n/2 & n\equiv 0 \pmod{4},\\[2mm]
\frac{n-2}{2} & n\equiv 2 \pmod{4}.
\end{cases}
\]

\end{document}